\documentclass[12pt]{amsart}
\usepackage[arrow, matrix, curve]{xy}
\usepackage{amssymb,amsmath,amsfonts, mathrsfs} 
\usepackage{amsthm} 
\setlength{\parindent}{0pt}

\newcommand{\IC}{\mathbb{C}}
\newcommand{\IR}{\mathbb{R}}

\newcommand{\IDD}{\mathscr{D}}

\newcommand{\IN}{\mathbb{N}}
\newcommand{\IZ}{\mathbb{Z}}

\newcommand*{\longhookrightarrow}%
               {\ensuremath{\lhook\joinrel\relbar\joinrel\rightarrow}}

\newcommand{\Id}{{\rm d}}

\newcommand{\f}{\frac}

\theoremstyle{plain}            
\newtheorem{theorem}{theorem}[section]
\newtheorem{Lemma}[theorem]{Lemma}
\newtheorem{Corollary}[theorem]{Corollary}
\newtheorem{Theorem}[theorem]{Theorem}
\newtheorem{Proposition}[theorem]{Proposition}
\newtheorem{Propandef}[theorem]{Proposition and definition}

\theoremstyle{definition}       
\newtheorem{Definition}[theorem]{Definition}

\newtheorem{Remark}[theorem]{Remark}
\newtheorem{Example}[theorem]{Example}

%

\begin{document}

\title[$H=W$ on arbitrary manifolds]{$\mathsf{L}^1$-elliptic regularity and $H=W$ on the whole 
$\mathsf{L}^p$-scale on arbitrary manifolds}

\author{Davide Guidetti, Batu G\"uneysu {\rm and} Diego Pallara}
\date{\today}
\address{\newline
Davide Guidetti, {\tt davide.guidetti@unibo.it}\newline
         \indent {\rm  Dipartimento di Matematica, Universit\`a di Bologna, Bologna, Italy}
\newline\newline
Batu G\"uneysu, {\tt gueneysu@math.hu-berlin.de} \newline
\indent {\rm Institut f\"ur Mathematik,
        Humboldt-Universit\"at zu Berlin, Berlin, Germany}  \newline  \newline
Diego Pallara, {\tt diego.pallara@unisalento.it}\newline
         \indent {\rm Dipartimento di Matematica e Fisica \emph{Ennio De Giorgi}, Universit\`a del Salento, 
         Lecce, Italy}
}

\begin{abstract} We define abstract Sobolev type spaces on $\mathsf{L}^p$-scales, $p\in [1,\infty)$, 
on Hermitian vector  bundles over possibly noncompact manifolds, which are induced by smooth measures 
and families $\mathfrak{P}$ of  linear partial  differential operators, and we prove the density of the 
corresponding smooth Sobolev sections in these spaces under a generalized ellipticity condition on the 
underlying family. In particular, this implies a covariant version of Meyers-Serrin\rq{}s theorem on the 
whole $\mathsf{L}^p$-scale, for arbitrary Riemannian manifolds. Furthermore, we prove a new local 
elliptic regularity result in $\mathsf{L}^1$ on the Besov scale, which shows that the above generalized 
ellipticity condition is satisfied on the whole $\mathsf{L}^p$-scale, if some differential operator from 
$\mathfrak{P}$ that has a sufficiently high (but not necessarily the highest) order is elliptic.
\end{abstract}

\maketitle
 
\setcounter{page}{1}

\section{Introduction}

Let us recall that a classical result of Meyers and Serrin \cite{meyers} states that for any open 
subset $U$ of the Euclidean $\IR^m$ and any $k\in\IN_{\geq 0}$, $ p\in [1,\infty)$, one has 
$\mathsf{W}^{k,p}(U)=\mathsf{H}^{k,p}(U)$, where $\mathsf{W}^{k,p}(U)$ is given as
the complex Banach 
space of all $f\in\mathsf{L}^1_{\mathrm{loc}}(U)$ such that 
\begin{align}\label{ms}
\left\|f\right\|_{k,p}:=
\Big(\int_{U}|f(x)|^p\Id x\Big)^{1/p}+
\sum_{|\alpha|\leq k}\Big(\int_{U}|\partial^{\alpha}f(x)|^p\Id x\Big)^{1/p}<\infty,
\end{align}
and where $\mathsf{H}^{k,p}(U)$ is defined as the closure of
$\mathsf{W}^{k,p}(U)\cap\mathsf{C}^{\infty}(U)$ 
with respect to the norm $\left\|\bullet\right\|_{k,p}$. \\
On the other hand, thinking for example of Riemannian geometry on noncompact manifolds, it becomes 
very natural to ask under what minimal assumptions one can replace the partial derivatives in (\ref{ms}) 
by more general partial differential operators, that are nonelliptic and typically vector-valued. In fact, in 
order to deal with all possible geometric situations simultaneously, we introduce an abstract notion of a 
$\mathfrak{P}$-Sobolev space $\Gamma_{\mathsf{W}^{\mathfrak{P},p}_{\mu}}(X,E)$ of 
$\mathsf{L}^p_{\mu}$-sections in a Hermitian vector bundle $E\to X$ (cf. Definition \ref{asa}). 
Here, $X$ is a possibly noncompact manifold, $\mu$ is a smooth measure on $X$ (which may, but 
need not come from a Riemannian metric in general), $F_1,\dots,F_s\to X$ are  Hermitian vector 
bundles, and the datum $\mathfrak{P}=\{P_1,\dots,P_s\}$ is a finite collection such that each 
$P_j$ is a linear partial differential operator of order $\leq k_j$ from $E$ to $F_j$. With 
$\left\|\bullet\right\|_{\mathfrak{P},p,\mu}$ the canonical norm on 
$\Gamma_{\mathsf{W}^{\mathfrak{P},p}_{\mu}}(X,E)$, the question we address here is: \\
\emph{Under which assumptions on $\mathfrak{P}$ is the space of smooth Sobolev sections}
\begin{align}\label{frage}
 \Gamma_{\mathsf{C}^{\infty}}(X,E)\cap \Gamma_{\mathsf{W}^{\mathfrak{P},p}_{\mu}}(X,E)\>
\text{\em dense in } \Gamma_{\mathsf{W}^{\mathfrak{P},p}_{\mu}}(X,E)\>
\text{\em w.r.t. $\left\|\bullet\right\|_{\mathfrak{P},p,\mu}$?}
\end{align}
To this end, the highest differential order $k:=\max\{k_1,\dots,k_s\}$ of the system $\mathfrak{P}$, 
plays an essential role: Namely, it turns out that even on an entirely local level (cf. Lemma \ref{molli}), 
the machinery of Friedrichs mollifiers precisely applies
\begin{align}
\text{ either if $k<2$, or if } \Gamma_{\mathsf{W}^{\mathfrak{P},p}_{\mu}}(X,E)  
\subset \Gamma_{\mathsf{W}^{k-1,p}_{\mathrm{loc}}}(X,E).\label{deds}
\end{align}
With this observation, our basic abstract result \emph{Theorem \ref{main} precisely states that the 
local regularity (\ref{deds}) implies (\ref{frage}), and that furthermore any compactly supported 
element of $\Gamma_{\mathsf{W}^{\mathfrak{P},p}_{\mu}}(X,E)$ can be even approximated by a sequence 
from $\Gamma_{\mathsf{C}^{\infty}_{\mathrm{c}}}(X,E)$.}\\
This result turns out to be optimal in the following sense (cf. Example \ref{bep}): There are differential 
operators $P$ such that for any $q>1$ one has 
\begin{align*}
&\mathsf{W}^{P,q}\subset \mathsf{W}^{\mathrm{ord}(P)-2,q}_{\mathrm{loc}}, 
\>\>\> \mathsf{W}^{P,q}\not\subset \mathsf{W}^{\mathrm{ord}(P)-1,q}_{\mathrm{loc}},\\
&\mathsf{C}^{\infty} \cap \mathsf{W}^{P,q} \text{ is not dense in } \mathsf{W}^{P,q}.
\end{align*}
Thus it remains to examine the regularity assumption (\ref{deds}) in applications, where of course we 
can assume $k\geq 2$. \\
To this end, it is clear from classical local elliptic estimates that for $p>1$, (\ref{deds}) is 
satisfied whenever
there is some elliptic $P_j$ with $k_j\geq k-1$. However, the $\mathsf{L}^1$-case $p=1$ is much more subtle, 
since the usual local elliptic regularity is well-known to fail here (cf. Remark \ref{counte}). However, 
\emph{in Theorem \ref{regu} we prove a new modified local elliptic regularity result on the scale of 
Besov spaces, which implies that in the $\mathsf{L}^1$-situation, one loses exactly one differential 
order of regularity when compared with the usual local elliptic $\mathsf{L}^p$, $p>1$, estimates.} This 
in turn shows that for $p=1$, (\ref{deds}) is satisfied whenever there is some elliptic $P_j$ with 
$k_j= k$. These observations are collected in Corollary \ref{ell}. The proof of Theorem \ref{regu} 
relies on \emph{a new existence and uniqueness result, (cf. Proposition \ref{pr2} in Section 
\ref{beweis2}) for certain systems of linear elliptic of PDE\rq{}s on the Besov scale,} which 
is certainly also of an independent interest.  \\
Finally, we would like to point out that the regularity (\ref{deds}) does not require the ellipticity 
of any $P_j$ at all. Indeed, \emph{in Corollary \ref{meyers} we prove that if $(M,g)$ is a possibly 
noncompact Riemannian manifold and $E\to M$ a Hermitian vector bundle with a (not necessarily Hermitian) 
covariant derivative $\nabla$, then for any $s\in\IN$ and $p\in (1,\infty)$, the Sobolev space}
\[
\Gamma_{\mathsf{W}^{s,p}_{\nabla,g}}(M,E):=
\Gamma_{\{\nabla^{1}_g,\dots,\nabla^{s}_g\},\mathsf{L}^p_{\mathrm{vol}_g}}(M,E).
\]
\emph{satisfies }
$$
\Gamma_{\mathsf{W}^{s,p}_{\nabla,g}}(M,E)\subset\Gamma_{\mathsf{W}^{s,p}_{\mathrm{loc}}}(M,E),
$$
which means that we do not even have to use the full strenght of Theorem \ref{main} here. To the best 
of our knowledge, the resulting density of 
$$
\Gamma_{\mathsf{W}^{s,p}_{\nabla,g}}(M,E)\cap \Gamma_{\mathsf{C}^{\infty}}(M,E)
\text{ in } \Gamma_{\mathsf{W}^{s,p}_{\nabla,g}}(M,E)
$$
is entirely new in this generality.

\section{Main results}

Throughout, \emph{let $X$ be a smooth $m$-manifold (without boundary)} which is allowed to be noncompact. 
For subsets $Y_1,Y_2\subset X$ we write 
\[
Y_1\Subset Y_2, \text{ if and only if $Y_1$ is open, 
$\overline{Y_1}\subset Y_2$, and $\overline{Y_1}$ is compact.}
\]
We abbreviate that for any $k\in\IN_{\geq 0}$, we denote with $\IN^m_{k}$ the set of multi-indices 
$\alpha\in (\IN_{\geq 0})^m$  with $|\alpha|:=\sum^{m}_{j=1}\alpha_j \leq k$. Note that 
$(0,\dots,0)\in \IN^m_k$ by definition, for any $k$. \\
In order to be able to deal with Banach structures that are not necessarily induced by Riemannian 
structures \cite{Br}, \emph{we fix a smooth measure $\mu$ on $X$,} that is, $\mu$ is a Borel measure on 
$X$ such that for any chart $U$ for $X$ there is a (necessarily unique) 
$0<\mu_U\in\mathsf{C}^{\infty}(U)$ with the property that   
\[
\mu(A)=\int_A \mu_U(x^1,\cdots, x^m)\Id x^1\cdots\Id x^m\>\text{ for all Borel sets $A\subset U$}, 
\]
where $\Id x=\Id x^1\cdots\Id x^m$ stands for Lebesgue integration.\\
We always understand our linear spaces to be complex-valued, and an index \lq\lq{}$\mathrm{c}$\rq\rq{} 
in spaces of sections or functions stands for \lq\lq{}compact support\rq\rq{}, where in the context of 
equivalence classes (with respect to some/all $\mu$ as above) of Borel measurable sections, compact 
support of course means \lq\lq{}compact essential support\rq\rq{}. \\
Assume for the moment that we are given smooth complex vector bundles $E\to X$, $F\to X$, with 
$\mathrm{rank}(E)=\ell_0$ and $\mathrm{rank}(F)=\ell_1$. The linear space of smooth sections in 
$E\to X $ is denoted by $\Gamma_{\mathsf{C}^{\infty}}(X,E)$, and the linear space of equivalence 
classes of Borel sections in $E\to X$ is simply written as $\Gamma(X,E)$.\\
We continue by listing some conventions and some notation concerning linear differential operators 
and distributions on manifolds. We start by adding the following two classical definitions on linear 
differential for the convenience of the reader, who can find these and the corresponding basics in 
\cite{nico,wald,chaz,lawson}. We also refer the reader to \cite{batu} (and the references therein) 
for the jet bundle aspects of (possibly nonlinear) partial differential operators.

\begin{Definition}\label{ops} A morphism of linear sheaves
$$
P: \Gamma_{\mathsf{C}^{\infty}}(X,E)\longrightarrow \Gamma_{\mathsf{C}^{\infty}}(X,F)
$$
is called a \emph{smooth linear partial differential operator of order at most $k$}, if for any chart
$$
x=(x^1,\dots,x^m):U\longrightarrow \IR^m 
$$
for $X$ which admits local frames $e_1,\dots,e_{\ell_0}\in\Gamma_{\mathsf{C}^{\infty}}(U,E)$, 
$f_1,\dots,f_{\ell_1}\in\Gamma_{\mathsf{C}^{\infty}}(U,F)$, and any $\alpha\in\IN^m_k$, there 
are (necessarily uniquely determined) smooth functions
$$
P_{\alpha}:U\longrightarrow \mathrm{Mat}(\IC;\ell_0\times \ell_1)
$$
such that for all $(\phi^1,\dots,\phi^{\ell_0})\in\mathsf{C}^{\infty}(U,\IC^{\ell_0})$ one has
$$
P\sum^{\ell_0}_{i=1}\phi^i e_i=\sum^{\ell_1}_{j=1}\sum^{\ell_0}_{i=1}\sum_{\alpha\in\IN^m_k}
P_{\alpha ij}\frac{\partial^{|\alpha|}\phi^{i}}{\partial x^{\alpha}}f_j\>\>\text{ in $U$}.
$$
\end{Definition}

The linear space of smooth at most $k$-th order linear partial differential operators is denoted by 
$\mathscr{D}^{(k)}_{\mathsf{C}^{\infty}}(X;E,F)$.

\begin{Propandef} Let $P\in \mathscr{D}^{(k)}_{\mathsf{C}^{\infty}}(X;E,F)$.\\
\emph{a)} The (linear principal) symbol of $P$
is the unique morphism of smooth complex vector bundles over $X$,
\begin{align*}
\sigma_P: (\mathrm{T}^*X)^{\odot k}\longrightarrow \mathrm{Hom}(E,F),
\end{align*}
where $\odot$ stands for the symmetric tensor product, such that for all $x:U\to \IR^m$, 
$e_1,\dots,e_{\ell_0}$, $f_1,\dots,f_{\ell_1}$, $\alpha$ as in Definition \ref{ops} one has
$$
\sigma_{P}\left(\Id x^{ \odot\alpha} \right)e_i= \sum^{\ell_1}_{j=1}P_{\alpha i j}f_j\>\>\text{ in $U$}.
$$
\emph{b)} $P$ is called \emph{elliptic}, if for 
all $x\in X$, $v\in\mathrm{T}^*_x X\setminus \{0\}$, the linear map 
\[
\sigma_{P,x}(v ):=\sigma_{P,x}(v^{\otimes k}): E_x\longrightarrow  F_x
\> \text{ is in $\mathrm{GL}(E_x,F_x)$.}
\]
\end{Propandef}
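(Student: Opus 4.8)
Part (b) is a definition, so the content to be proved is that of part (a): existence and uniqueness of the principal symbol $\sigma_P$. Uniqueness is essentially formal. On the domain $U$ of a chart $x:U\to\IR^m$ together with local frames $e_1,\dots,e_{\ell_0}$ of $E$ and $f_1,\dots,f_{\ell_1}$ of $F$ as in Definition \ref{ops}, the sections $\Id x^{\odot\alpha}$ with $|\alpha|=k$ (the only multi-indices for which $\Id x^{\odot\alpha}$ lies in $(\mathrm{T}^*X)^{\odot k}$) form, at each $p\in U$, a basis of $(\mathrm{T}^*_pX)^{\odot k}$, and the $e_i(p)$ a basis of $E_p$; hence the prescribed identity $\sigma_P(\Id x^{\odot\alpha})e_i=\sum_j P_{\alpha ij}f_j$ determines $\sigma_P|_U$ completely, as a morphism which is smooth because the $P_{\alpha ij}$ are. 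Since such $U$ cover $X$ and two global bundle morphisms that agree on each member of an open cover coincide, $\sigma_P$ is unique if it exists, and existence amounts to the local statement that these $\sigma_P|_U$ patch together. The plan for existence is to exhibit $\sigma_P$ by a manifestly chart-free formula.

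For $p\in X$, $v\in\mathrm{T}^*_pX$ and $u\in E_p$, choose $\varphi\in\mathsf{C}^{\infty}(X)$ with $\varphi(p)=0$ and $(\Id\varphi)_p=v$, and $e\in\Gamma_{\mathsf{C}^{\infty}}(X,E)$ with $e(p)=u$, and set
\[
\widetilde\sigma_{P,p}(v^{\odot k})\,u\deff\f{1}{k!}\bigl(P(\varphi^k e)\bigr)(p).
\]
The only point needing care for well-definedness is the elementary observation---immediate from the local expression in Definition \ref{ops}---that a differential operator of order $\le k$ annihilates at $p$ every section that vanishes to order $\ge k+1$ at $p$. Granting it: if $\varphi'$ and $e'$ are other admissible choices, then $\varphi-\varphi'$ vanishes to order $\ge 2$ at $p$ and $e-e'$ to order $\ge 1$, so from $\varphi^k-{\varphi'}^k=(\varphi-\varphi')\sum_{a+b=k-1}\varphi^a{\varphi'}^b$ one sees that $\varphi^k e-{\varphi'}^k e'$ vanishes to order $\ge k+1$ at $p$, whence $\widetilde\sigma_{P,p}(v^{\odot k})u$ does not depend on the choices; linearity in $u$ is clear. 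Passing to the chart $x$ normalized so that $x(p)=0$ and writing $e=\sum_i e^i e_i$ near $p$, only the top-order coefficients $P_\alpha$, $|\alpha|=k$, contribute to $(P(\varphi^k e))(p)$ (since $\varphi^k e^i$ vanishes to order $k$), and a short computation using $\p^\alpha(\varphi^k)(p)=k!\,v^\alpha$ for $|\alpha|=k$ shows that $v\mapsto\widetilde\sigma_{P,p}(v^{\odot k})u$ is a $\mathrm{Hom}(E_p,F_p)$-valued homogeneous polynomial of degree $k$ in the chart-components of $v$, whose coefficients are exactly the $P_\alpha(p)$, hence smooth in $p$. In characteristic zero such a polynomial is precisely the datum of a linear map $(\mathrm{T}^*_pX)^{\odot k}\to\mathrm{Hom}(E_p,F_p)$ (recovered from its values $\widetilde\sigma_{P,p}(v^{\odot k})$ on the diagonal vectors), so these assemble into a smooth bundle morphism $\sigma_P\colon(\mathrm{T}^*X)^{\odot k}\to\mathrm{Hom}(E,F)$; once the conventional normalization relating homogeneous polynomials to symmetric tensors is fixed, the chart computation gives $\sigma_P(\Id x^{\odot\alpha})e_i=\sum_j P_{\alpha ij}f_j$ in every chart, which together with the first paragraph completes the proof.

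I expect the only genuinely delicate ingredient to be the order-of-vanishing bookkeeping supporting the intrinsic formula---proving that a $P$ of order $\le k$ kills, at $p$, any section vanishing to order $\ge k+1$ there, and checking that $\varphi^k e-{\varphi'}^k e'$ does so. The remaining steps are linear algebra, including the standard characteristic-zero identification of $\mathrm{Hom}$-valued homogeneous degree-$k$ polynomials on $\mathrm{T}^*_pX$ with linear functionals on the symmetric power $(\mathrm{T}^*_pX)^{\odot k}$, which also pins down the normalization constant. A purely computational alternative bypasses intrinsic formulas: write out how the coefficients $P_\alpha$ transform under a change of chart and of local frames, observe that for $|\alpha|=k$ only the Jacobian of the coordinate change enters, and verify that the top-order coefficients then transform exactly as the components of a $\mathrm{Hom}(E,F)$-valued symmetric contravariant $k$-tensor field---correct, but heavier and less illuminating.
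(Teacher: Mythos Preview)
The paper does not actually prove this Propandef; it is stated without proof, with the reader referred to the standard references \cite{nico,wald,chaz,lawson} for the basics on linear differential operators. So there is no paper-proof to compare against.

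Your argument is correct and is the standard intrinsic construction of the principal symbol via the formula $\sigma_{P,p}(v^{\odot k})u=\tfrac{1}{k!}\bigl(P(\varphi^k e)\bigr)(p)$. The well-definedness check (that $\varphi^k e-{\varphi'}^k e'$ vanishes to order $\geq k+1$ at $p$, together with the fact that an operator of order $\leq k$ kills such sections at $p$) is handled correctly, and the local computation $\partial^\alpha(\varphi^k)(p)=k!\,v^\alpha$ for $|\alpha|=k$ recovers the prescribed coordinate formula. The only point you rightly flag as convention-dependent is the identification of degree-$k$ homogeneous polynomials with linear maps on $(\mathrm{T}^*_pX)^{\odot k}$; different authors normalize $\Id x^{\odot\alpha}$ differently (with or without the multinomial factor $\alpha!/k!$), and the paper's displayed identity fixes that convention implicitly. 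Your alternative route via the transformation law of the top-order coefficients is also standard and would work equally well.
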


We recall that the linear space $\Gamma_{\mathsf{W}^{k,p}_{\mathrm{loc}}}(X,E)$ of 
\emph{local $\mathsf{L}^{p}$-Sobolev sections in $E\to X$ with differential order $k$} is defined 
to be the space of $f\in\Gamma(X,E)$ such that for all charts $U\subset X$ which admit a local 
frame $e_1,\dots,e_{\ell_0}\in\Gamma_{\mathsf{C}^{\infty}}(U,E)$, one has 
$$
(f^1,\dots,f^{{\ell_0}}) \in \mathsf{W}^{k,p}_{\mathrm{loc}}(U,\IC^{\ell_0}),\>\>
\text { if $f=\sum^{\ell_0}_{j=1}f^je_j$ in $U$.} 
$$
In particular, we have the space of locally $p$-integrable sections
$$
\Gamma_{\mathsf{L}^p_{\mathrm{loc}}}(X,E):=\Gamma_{\mathsf{W}^{0,p}_{\mathrm{loc}}}(X,E).
$$ 
The linear space of \emph{distributional sections} in $E\to X$ is defined by
\begin{align*}
&\Gamma_{\mathsf{D}\rq{}}(X,E):=\text{ topological dual of $\Gamma_{\mathsf{D}}(X,E)$, where}\\
&\Gamma_{\mathsf{D}}(X,E):=\Gamma_{\mathsf{C}^{\infty}_{\mathrm{c}}}(X,E^*\otimes |X|),
\end{align*}
and where $|X|\to X$ denotes the bundle of $1$-densities, which is a smooth complex line bundle. 
We have the canonical embedding
$$
\Gamma_{\mathsf{L}^1_{\mathrm{loc}}}(X,E)\longhookrightarrow \Gamma_{\mathsf{D}\rq{}}(X,E),
$$
given by identifying $f\in\Gamma_{\mathsf{L}^1_{\mathrm{loc}}}(X,E)$ with the distribution
$$
\left\langle f,\Psi\right\rangle := \int_X \Psi[f],\>\>\>\Psi\in\Gamma_{\mathsf{D}}(X,E). 
$$
We continue with (cf. Proposition 1.2.12 in \cite{wald}, or \cite{chaz}):

\begin{Lemma} For any $P\in\mathscr{D}^{(k)}_{\mathsf{C}^{\infty}}(X;E,F)$, there is a unique 
differential operator
$$
P^t\in\mathscr{D}^{(k)}_{\mathsf{C}^{\infty}}(X;F^*\otimes |X|,E^*\otimes |X|),
$$
\emph{the transpose of $P$}, which satisfies
\begin{align*} 
&\int_X P^t\Psi[\phi]=\int_X \Psi[P\phi] \>\>
\text{ for all $\Psi \in\Gamma_{\mathsf{C}^{\infty}}(X,F^*\otimes |X|)$,}
\\ \nonumber
& \text{  and all $\phi\in \Gamma_{\mathsf{C}^{\infty}}(X,E)$, with either $\phi$ or $\Psi$ 
compactly supported.} 
\end{align*}
\end{Lemma}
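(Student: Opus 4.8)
The plan is to construct $P^t$ locally from the classical integration-by-parts formula in coordinates and then to patch the local pieces together, using the uniqueness assertion — which I would establish first — to guarantee that the patches match.

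\emph{Uniqueness.} Since $\mathscr{D}^{(k)}_{\mathsf{C}^{\infty}}(X;F^*\otimes|X|,E^*\otimes|X|)$ is a linear space, it suffices to show that an operator $R$ in it with $\int_X(R\Psi)[\phi]=0$ for all $\Psi\in\Gamma_{\mathsf{C}^{\infty}}(X,F^*\otimes|X|)$ and all $\phi\in\Gamma_{\mathsf{C}^{\infty}_{\mathrm{c}}}(X,E)$ is the zero operator. Fixing $\Psi$, write $R\Psi\in\Gamma_{\mathsf{C}^{\infty}}(X,E^*\otimes|X|)$ in a chart $U$ carrying a frame $e_1,\dots,e_{\ell_0}$ of $E$, with dual coframe $e^1,\dots,e^{\ell_0}$ and coordinate density $|\Id x|$, as $R\Psi=\sum_i\rho_i\,e^i\otimes|\Id x|$ with $\rho_i\in\mathsf{C}^{\infty}(U)$; testing against $\phi=f\,e_i$ for arbitrary $f\in\mathsf{C}^{\infty}_{\mathrm{c}}(U)$ and invoking the fundamental lemma of the calculus of variations gives $\rho_i\equiv 0$, so $R\Psi$ vanishes on $U$, and since such charts cover $X$, $R\Psi=0$ on $X$. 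As this holds for every $\Psi$, $R=0$.

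\emph{Local construction.} On a chart $U$ with frames $e_1,\dots,e_{\ell_0}$ of $E$ and $f_1,\dots,f_{\ell_1}$ of $F$, dual coframes, and coordinate density $|\Id x|$, I would write $\phi=\sum_i\phi^i e_i$, $P\phi=\sum_{i,j}\sum_{\alpha\in\IN^m_k}P_{\alpha ij}(\partial^{\alpha}\phi^i)f_j$ and $\Psi=\sum_j\psi_j\,f^j\otimes|\Id x|$, so that $\Psi[P\phi]=\big(\sum_{i,j}\sum_{\alpha\in\IN^m_k}\psi_j P_{\alpha ij}\,\partial^{\alpha}\phi^i\big)|\Id x|$; integrating over $U$ with $\phi$ compactly supported in $U$ and integrating by parts to move all derivatives off $\phi$ then identifies
\[
P^t\Psi:=\sum_i\Big(\sum_{j}\sum_{\alpha\in\IN^m_k}(-1)^{|\alpha|}\,\partial^{\alpha}\big(\psi_j P_{\alpha ij}\big)\Big)e^i\otimes|\Id x|
\]
as the unique local candidate. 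By the Leibniz rule the right-hand side depends on $\Psi$ through a differential operator of order at most $k$, i.e.\ lies in $\mathscr{D}^{(k)}_{\mathsf{C}^{\infty}}(U;F^*\otimes|X|,E^*\otimes|X|)$, and by construction $\int_U\Psi[P\phi]=\int_U(P^t\Psi)[\phi]$ for all $\Psi\in\Gamma_{\mathsf{C}^{\infty}}(U,F^*\otimes|X|)$ and all $\phi$ compactly supported in $U$.

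\emph{Globalization.} On the overlap of two such charts the two local operators both satisfy the displayed integration-by-parts identity against everything compactly supported in the overlap, so — after multiplying $\Psi$ by a cutoff supported in the overlap, to reduce to the compactly supported case — the uniqueness argument (applied on the overlap) shows that they coincide there; hence the local operators glue to a single $P^t\in\mathscr{D}^{(k)}_{\mathsf{C}^{\infty}}(X;F^*\otimes|X|,E^*\otimes|X|)$. For the global identity I would assume $\phi$ compactly supported (the case of $\Psi$ compactly supported being symmetric), choose a finite partition of unity $(\chi_i)$ with each $\chi_i$ smooth, compactly supported in a chart, and $\sum_i\chi_i\equiv 1$ near $\mathrm{supp}(\phi)$, write $P\phi=\sum_i P(\chi_i\phi)$ with each $\chi_i\phi$ compactly supported in a chart, and sum the local identities. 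The only point needing care is the bookkeeping with the $1$-density bundle $|X|$, so that $P^t\Psi$ is visibly a section of $E^*\otimes|X|$ and the local formula is independent of the chart used to derive it; but the latter independence is automatic from the uniqueness step, which is why I would carry out that step first.
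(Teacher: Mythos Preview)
The paper does not actually prove this lemma: it states it with the parenthetical citation ``(cf.\ Proposition 1.2.12 in \cite{wald}, or \cite{chaz})'' and moves on. Your argument is the standard one and is correct; uniqueness via the fundamental lemma of the calculus of variations, the local formula $P^t\Psi=\sum_i\big(\sum_{j,\alpha}(-1)^{|\alpha|}\partial^{\alpha}(\psi_jP_{\alpha ij})\big)e^i\otimes|\Id x|$ from integration by parts, and gluing via uniqueness are exactly what the cited references do, so there is nothing to compare.

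One small remark on the ``symmetric'' case in your globalization step: when $\Psi$ (rather than $\phi$) is compactly supported, you decompose $\Psi=\sum_i\chi_i\Psi$ and need the local identity with $\chi_i\Psi$ compactly supported in $U_i$ but $\phi|_{U_i}$ not. Your local construction paragraph only states the identity for $\phi$ compactly supported in $U$; the integration-by-parts computation of course works equally well when it is $\Psi$ that has compact support in $U$, so you might say this explicitly rather than just calling it ``symmetric''.
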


Using the transpose, one extends any $P\in \mathscr{D}^{(k)}_{\mathsf{C}^{\infty}}(X;E,F)$ 
canonically to a linear map 
$$
P: \Gamma_{\mathsf{D}\rq{}}(X,E)\longrightarrow \Gamma_{\mathsf{D}\rq{}}(X,F), 
$$
by requiring
$$
\left\langle Ph,\phi\right\rangle  = \left\langle h,P^t\phi\right\rangle\>
\text{ for all $h\in \Gamma_{\mathsf{D}\rq{}}(X,E)$, $\phi\in \Gamma_{\mathsf{D}}(X,F)$}.
$$

\begin{Remark}\label{adjo}1. Assume that $E\to X$ and $F\to X$ come equipped with smooth Hermitian structures $h_E(\bullet,\bullet)$ and $h_F(\bullet,\bullet)$, respectively. We define $P^{\mu}\in\mathscr{D}^{(k)}_{\mathsf{C}^{\infty}}(X;F^*,E^*)$ by 
$$
(P^{\mu}\psi )\otimes \mu := P^{t}(\psi\otimes \mu),\>\> \psi \in\Gamma_{\mathsf{C}^{\infty}}(X,F^*),
$$
and $P^{\mu, h_E,h_F}\in\mathscr{D}^{(k)}_{\mathsf{C}^{\infty}}(X;F,E)$ by the diagram
\[
\begin{xy}
\xymatrix{
\Gamma_{\mathsf{C}^{\infty}}(X,F^*)\>\>\ar[rr]^{P^{\mu}} & & \>\>\Gamma_{\mathsf{C}^{\infty}}(X,E^*)\ar[dd]^{\tilde{h}_E^{-1}} \\	 \\
\Gamma_{\mathsf{C}^{\infty}}(X,F)\ar[uu]^{\tilde{h}_F}\ar@{.>}_{P^{\mu, h_E,h_F}}[rr] & & \Gamma_{\mathsf{C}^{\infty}}(X,E)
}
\end{xy}
\]
where $\tilde{h_E}$ and $\tilde{h_F}$ stand for the isomorphisms of $\mathsf{C}^{\infty}(X)$-modules which are induced by $h_E$ and $h_F$, respectively. Then $P^{\mu, h_E,h_F}$ is the uniquely determined element of $\mathscr{D}^{(k)}_{\mathsf{C}^{\infty}}(X;F,E)$ which satisfies
\begin{align*}
&\int_X h_E\left(P^{\mu, h_E,h_F} \psi,\phi\right)\Id  \mu = \int_X h_F\left(\psi , P\phi\right) \Id \mu 
\end{align*}
for all $\psi \in\Gamma_{\mathsf{C}^{\infty}}(X,F)$, $\phi\in \Gamma_{\mathsf{C}^{\infty}}(X,E)$ with either $\phi$ or $\psi$ compactly supported.\\
2. Given $f_1\in \Gamma_{\mathsf{L}^{1}_{\mathrm{loc}}}(X,E)$, $f_2\in \Gamma_{\mathsf{L}^{1}_{\mathrm{loc}}}(X,F)$ one has $Pf_1=f_2$, if and only if for \emph{some} triple $(\mu,h_E,h_F)$ as above it holds that 
\begin{align}
&\int_X h_E\left(P^{\mu, h_E,h_F} \psi,f_1\right)\Id  \mu = \int_X h_F\left(\psi , f_2\right) \Id \mu\>\text{ for all $\psi \in\Gamma_{\mathsf{C}^{\infty}_{\mathrm{c}}}(X,F)$ },\label{sdf}
\end{align}
and then (\ref{sdf}) automatically holds for \emph{all} such triples $(\mu,h_E,h_F)$.
\end{Remark}

From now on, given a smooth \emph{Hermitian} vector bundle $E\to X$ and $p\in [1,\infty]$, abusing the 
notation as usual, $(\bullet,\bullet)_x$ denotes the inner product on the fiber $E_x$, with 
$\left|\bullet\right|_x$ the corresponding norm, and we get a Banach space
\[
\Gamma_{\mathsf{L}^p_{\mu}}(X,E):=
\left\{f\left|f\in \Gamma(X,E),\left\| f\right\|_{p,\mu}<\infty\right\}\right.,
\]
where
\[
\left\| f\right\|_{p,\mu}:=\begin{cases}&\Big(\int_X \big|f(x)\big|^p_x  \mu(\Id x)\Big)^{1/p} 
,\text{ if $p<\infty$} \\
&\inf\{C|C\geq 0, |f|\leq C\text{ $\mu$-a.e.}\},\text{ if $p=\infty$.}\end{cases}
\]
Of course, $\Gamma_{\mathsf{L}^2_{\mu}}(X,E)$ becomes a Hilbert space with its canonical inner 
product. \\
The following definition is in the center of this paper:

\begin{Definition}\label{asa} 
Let $p\in [1,\infty]$, $s\in\IN$, $k_1\dots,k_s\in\IN_{\geq 0}$, and for each $i\in\{1,\dots,s\}$ 
let $E\to X$, $F_i\to X$ be smooth Hermitian vector bundles and let $\mathfrak{P}:=\{P_1,\dots,P_s\}$ 
with $P_{i}\in\allowbreak \mathscr{D}_{\mathsf{C}^{\infty}}^{(k_i)}(X;E,F_i)$. Then the Banach space 
\begin{align*}
&\Gamma_{\mathsf{W}^{\mathfrak{P},p}_{\mu}}(X,E)\\
&:=\left.\Big\{f\right|f\in\Gamma_{\mathsf{L}^{p}_{\mu}}(X,E), 
P_{i} f\in\Gamma_{ \mathsf{L}^{p}_{\mu} }(X,F_i)\text{ for all $i\in\{1,\dots,s\}$}\Big\}\\
&\>\>\>\> \>\>\>\>\subset \Gamma_{\mathsf{L}^{p}_{\mu}}(X,E),\>
\text{ with norm $\left\| f\right\|_{\mathfrak{P},p,\mu}
:=\left(\left\|f\right\|^p_{p,\mu}+\sum^s_{i=1}\left\|P_{i} f\right\|^p_{p,\mu}\right)^{1/p}$},
\end{align*}
is called the \emph{$\mathfrak{P}$-Sobolev space of $\mathsf{L}^{p}_{\mu}$-sections} in $E\to X$. 
\end{Definition}

Note that in the above situation, $\Gamma_{\mathsf{W}^{\mathfrak{P},2}_{\mu}}(X,E)$ is a Hilbert space 
with the obvious inner product, and we have the linear space
\begin{align*}
&\Gamma_{\mathsf{W}^{\mathfrak{P},p}_{\mathrm{loc}}}(X,E)\\
&:=
\left\{f\left|f\in\Gamma_{\mathsf{L}^p_{\mathrm{loc}}}(X,E),P_{i} 
f\in\Gamma_{ \mathsf{L}^{p}_{\mathrm{loc}} }(X,F_i)\text{ for all $i\in\{1,\dots,s\}$} 
\right\}\right. 
\end{align*}
of \emph{locally $p$-integrable sections in $E\to X$ with differential structure $\mathfrak{P}$,} 
which of course does not depend on any Hermitian structures.\\
In this context, let us record the following local elliptic regularity result, whose 
$\mathsf{L}^{p}_{\mathrm{loc}}$-case, $p\in (1,\infty)$, is classical (see for example Theorem 10.3.6 in 
\cite{nico}), while the $\mathsf{L}^{1}_{\mathrm{loc}}$-case 
seems to be entirely new, and can be considered as our first main result:

\begin{Theorem}\label{regu} Let $U\subset \IR^m$ be open, let $k\in\IN_{\geq 0}$, $\ell\in\IN$, and 
let $P\in\IDD^{(k)}_{\mathsf{C}^{\infty}}(U;\IC^{\ell},\IC^{\ell})$,
$$
P= \sum_{\alpha\in \IN^m_{k} }  P_{\alpha}\partial^{\alpha},
\>\text{ with $P_{\alpha}:U\longrightarrow\mathrm{Mat}(\IC;\ell\times \ell)$ 
in $\mathsf{C}^{\infty}$}
$$
be elliptic. Then the following results hold true:\\
\emph{a)} If $p\in (1,\infty)$, then for any $f\in \mathsf{L}^{p}_{\mathrm{loc}}(U,\IC^{\ell})$ with 
$Pf\in \mathsf{L}^{p}_{\mathrm{loc}}(U,\IC^{\ell})$ one has 
$f\in\mathsf{W}^{k,p}_{\mathrm{loc}}(U,\IC^{\ell})$.\\
\emph{b)} For any $f\in\mathsf{L}^{1}_{\mathrm{loc}}(U,\IC^{\ell})$ with 
$Pf\in \mathsf{L}^{1}_{\mathrm{loc}}(U,\IC^{\ell})$ it holds that 
$f\in\mathsf{W}^{k-1,1}_{\mathrm{loc}}(U,\IC^{\ell})$.
\end{Theorem}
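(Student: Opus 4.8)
The overall plan is to obtain both statements from a solvability/regularity result for model elliptic systems, by the usual device of cutting off and bootstrapping the differential order of regularity, the sole --- but decisive --- difference between a) and b) being the scale on which the model problem is treated. For a) the natural scale is the Bessel potential scale $\mathsf{H}^{s,p}(\IR^m)$, which for $s\in\IN_{\geq 0}$ coincides with $\mathsf{W}^{s,p}$ because $p\in(1,\infty)$, and the required fact --- that the high-frequency part of the inverse of the model symbol gains $k$ derivatives on $\mathsf{H}^{s,p}$ --- is classical Calder\'on--Zygmund/Mikhlin--H\"ormander multiplier theory; freezing coefficients, absorbing the variable-coefficient remainder on a small ball, and iterating then upgrades the regularity one derivative at a time from $f\in\mathsf{L}^p_{\mathrm{loc}}=\mathsf{H}^{0,p}_{\mathrm{loc}}$ up to $f\in\mathsf{H}^{k,p}_{\mathrm{loc}}$, with no loss since $Pf\in\mathsf{L}^p_{\mathrm{loc}}$. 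As a) is standard I would be content to cite Theorem 10.3.6 in \cite{nico}. For b) the crucial point is that this multiplier theory fails on $\mathsf{L}^1$, so the model problem must be solved on the \emph{Besov} scale $\mathsf{B}^s_{1,q}(\IR^m)$, on which Mikhlin-type multipliers \emph{remain} bounded for every $q\in[1,\infty]$. This substitution is exactly what costs the one derivative separating b) from a): on the way in one only has $\mathsf{L}^1\hookrightarrow\mathsf{B}^0_{1,\infty}$, while on the way out one can only pass from $\mathsf{B}^j_{1,1}$ --- not from $\mathsf{B}^j_{1,\infty}$ --- back into the genuine $\mathsf{L}^1$-Sobolev space $\mathsf{W}^{j,1}$.

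For b) I would first assemble the Besov facts needed, on $\IR^m$ and in their $\mathrm{loc}$-versions on $U$: the embeddings $\mathsf{B}^0_{1,1}\hookrightarrow\mathsf{L}^1\hookrightarrow\mathsf{B}^0_{1,\infty}$ and, for $s\leq s'$, $\mathsf{B}^{s'}_{1,q}\hookrightarrow\mathsf{B}^{s}_{1,q}$; the "summability trade" $\mathsf{B}^{s}_{1,\infty}\hookrightarrow\mathsf{B}^{s'}_{1,1}$ whenever $s>s'$; the mapping property $\partial^{\beta}\colon\mathsf{B}^{s}_{1,q}\to\mathsf{B}^{s-|\beta|}_{1,q}$ together with its variable-coefficient counterpart (multiplication by a $\mathsf{C}^{\infty}$ function preserves every $\mathsf{B}^{s}_{1,q}$, being a zeroth-order operator); and, as a consequence of the first three, $\mathsf{B}^{j}_{1,1}\hookrightarrow\mathsf{W}^{j,1}$ for every $j\in\IN_{\geq 0}$. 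The heart of the argument is then the model result, which is precisely where Proposition \ref{pr2} enters: for an elliptic differential operator $P_0$ of order $k$ it provides, on a sufficiently small ball $B$ and for the relevant range of $s$, both the a priori estimate
\[
\|u\|_{\mathsf{B}^{s+k}_{1,q}}\lesssim\|P_0u\|_{\mathsf{B}^{s}_{1,q}}+\|u\|_{\mathsf{B}^{s}_{1,q}}\qquad(\operatorname{supp}u\subset B)
\]
and the complementary existence/uniqueness statement: every admissible $g$ equals $P_0w$ for a unique $w$ of the expected support and regularity. At the level of $\IR^m$ and constant coefficients this rests on checking that $\xi\mapsto(i\xi)^{\beta}\chi(\xi)\sigma_0(\xi)^{-1}$ is a Mikhlin multiplier for $|\beta|\leq k$, where $\sigma_0(\xi)=\sum_{\alpha\in\IN^m_k}c_{\alpha}(i\xi)^{\alpha}$ is the full symbol and $\chi\in\mathsf{C}^{\infty}(\IR^m)$ vanishes near $0$ and equals $1$ where $\sigma_0$ is invertible; the variable coefficients are then absorbed inside the proof of Proposition \ref{pr2} by freezing coefficients and a perturbation argument.

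Granting this, the proof of b) is a finite induction localized at an arbitrary $x_0\in U$. Choose nested balls $B_k\Subset\cdots\Subset B_0\Subset U$ around $x_0$, with $B_0$ small enough that Proposition \ref{pr2} applies on $B_0$ to $P$, and cutoffs $\varphi_t\in\mathsf{C}^{\infty}_{\mathrm{c}}(B_t)$ with $\varphi_t\equiv 1$ on $B_{t+1}$. Since $f\in\mathsf{L}^{1}_{\mathrm{loc}}\hookrightarrow\mathsf{B}^{0}_{1,\infty,\mathrm{loc}}$, the induction starts from $f\in\mathsf{B}^{0}_{1,\infty}$ near $x_0$. Suppose $f\in\mathsf{B}^{t}_{1,\infty}$ on $B_t$ for some $0\leq t\leq k-1$. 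From the commutator identity $P(\varphi_tf)=\varphi_t\,Pf+[P,\varphi_t]f$, using that $\varphi_tPf\in\mathsf{L}^1\subset\mathsf{B}^{0}_{1,\infty}\subset\mathsf{B}^{t+1-k}_{1,\infty}$ (as $t+1-k\leq 0$) and that $[P,\varphi_t]$ is a differential operator of order $\leq k-1$ with coefficients supported in $B_t$, so that $[P,\varphi_t]f\in\mathsf{B}^{t-k+1}_{1,\infty}$, one gets $P(\varphi_tf)\in\mathsf{B}^{t+1-k}_{1,\infty}$ with compact support in $B_0$; the solvability half of Proposition \ref{pr2} (with $s=t+1-k$, $q=\infty$) then produces $w\in\mathsf{B}^{t+1}_{1,\infty}$ with $Pw=P(\varphi_tf)$, whence $w=\varphi_tf$ by uniqueness and therefore $f\in\mathsf{B}^{t+1}_{1,\infty}$ on $B_{t+1}$. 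Running this for $t=0,1,\dots,k-1$ yields $f\in\mathsf{B}^{k}_{1,\infty}$ on $B_k$, and the embedding chain $\mathsf{B}^{k}_{1,\infty,\mathrm{loc}}\hookrightarrow\mathsf{B}^{k-1}_{1,1,\mathrm{loc}}\hookrightarrow\mathsf{W}^{k-1,1}_{\mathrm{loc}}$ finishes the proof, as $x_0$ was arbitrary. The main obstacle is exactly the $\mathsf{L}^1$-unboundedness of singular integrals: it is what forces the detour through the Besov scale and, with it, the unavoidable loss of one derivative, and it makes the model step --- Proposition \ref{pr2}, rather than the bootstrap --- the technical core; a minor but genuine point to keep track of is that the bootstrap must consist of the fixed number $k$ of steps, so that the terminal ball $B_k$ remains a true neighborhood of $x_0$.
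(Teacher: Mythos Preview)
Your overall strategy for b) --- pass to the Besov scale $\mathsf{B}^s_{1,\infty}$, bootstrap the regularity exponent up to $\mathsf{B}^k_{1,\infty}$, then embed into $\mathsf{W}^{k-1,1}$ --- is exactly the paper's, and your explanation of why one derivative is lost is correct. The gap is in how you invoke Proposition~\ref{pr2}.

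Proposition~\ref{pr2} is \emph{not} a local result ``on a small ball $B$'' for a general elliptic $P_0$: it is a global statement on $\IR^m$ for an operator $Q$ with $\mathsf{W}^{\infty,\infty}$ coefficients satisfying the ray condition~(\ref{ellp}), and it solves $(r^n e^{i\theta_0}-Q)w=g$ for large $r$, not $P_0w=g$. A merely elliptic $P$ need not satisfy~(\ref{ellp}) for any $\theta_0$: the Cauchy--Riemann operator $\partial_1+i\partial_2$ is elliptic, yet its symbol $i\xi_1-\xi_2$ hits every nonzero complex number, so no ray is admissible. The paper handles both issues. It replaces $P$ by $T=P^{\dagger}P$, whose principal symbol is positive definite and therefore satisfies~(\ref{ellp}) with $\theta_0=\pi$; it explicitly constructs a global $Q^{\psi}$ with $\mathsf{W}^{\infty,\infty}$ coefficients agreeing with $T$ near $\mathrm{supp}(\psi)$ and still satisfying~(\ref{ellp}); and it rewrites the equation for $\psi f$ in the form $R^{2k}w+Q^{\psi}w=R^{2k}\psi f+P^{\dagger}(\psi Pf)+P^{\dagger}[P,\psi]f$, so that Proposition~\ref{pr2} applies directly and uniqueness at the lower Besov index forces $w=\psi f$ with the improved regularity. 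Your line ``Proposition~\ref{pr2} produces $w$ with $Pw=P(\varphi_t f)$, whence $w=\varphi_t f$ by uniqueness'' does not follow from the proposition as stated. Once you incorporate (i) the passage to $P^{\dagger}P$, (ii) the global extension, and (iii) the $(R^{2k}+Q^{\psi})$-form of the equation, your integer-step bootstrap on nested balls becomes a legitimate variant of the paper's $\sup$-argument with half-steps.
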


Before we come to the proof, a few remarks are in order:

\begin{Remark}\label{counte} 
In fact, we are going to prove the following much stronger 
statement in part b): Under the assumptions of Theorem \ref{regu} b), for any 
$f\in\mathsf{L}^{1}_{\mathrm{loc}}(U,\IC^{\ell})$ with $Pf\in \mathsf{L}^{1}_{\mathrm{loc}}(U,\IC^{\ell})$, 
one has that for any $\psi\in\mathsf{C}^{\infty}_{\mathrm{c}}(U)$, the distribution $\psi f$ is in the 
Besov space 
$$
\mathsf{B}^{k}_{1,\infty}(\IR^m,\IC^{\ell})\subset\mathsf{W}^{k-1,1}(\IR^m,\IC^{\ell}).
$$
This in turn is proved using a new existence and uniqueness result (cf. Proposition \ref{pr2} in Section 
\ref{beweis2}) for certain systems of linear elliptic PDE\rq{}s on the Besov scale. We refer the reader to 
Section \ref{beweis2} for the definition and essential properties of the Besov spaces 
$\mathsf{B}^{\beta}_{p,q}(\IR^m,\IC^\ell)\subset\mathsf{S}\rq{}(\IR^m,\IC^\ell)$ (with 
$\mathsf{S}\rq{}(\IR^m)$ the Schwartz distributions), where $\beta\in\IR$, $p,q\in [1,\infty]$. Note that 
in the situation of Theorem \ref{regu} b), the assumptions 
$f,Pf\in\mathsf{L}^{1}_{\mathrm{loc}}(U,\IC^{\ell})$, 
do not imply $f\in\mathsf{W}^{k,1}_{\mathrm{loc}}(U,\IC^{\ell})$: An explicit counter example has 
been given in \cite{rother} for the Euclidean Laplace operator. In fact, it follows from results of 
\cite{davide2} that for any strongly elliptic differential operator $P$ in $\IR^m$ with constant 
coefficients and order $2k$, there is a $f$ with $f,Pf\in\mathsf{L}^{1}_{\mathrm{loc}}(\IR^m)$, and 
$f\notin\mathsf{W}^{2k,1}_{\mathrm{loc}}(\IR^m)$. 
In this sense, the above $k$-th order Besov regularity can be considered to be optimal.
\end{Remark}

\begin{proof}[Proof of Theorem \ref{regu} b)] In this proof, we denote with $(\bullet,\bullet)$ the 
standard inner product in each $\IC^n$, and with $\left|\bullet\right|$ the corresponding norm and 
operator norm, and $\mathrm{B}_r(x)$ stands for the corresponding open ball of radius $r$ around $x$. 
Let us consider the formally self-adjoint elliptic partial differential operator 
$$
T:=P^{\dagger}P=
\sum_{\alpha \in \IN_{2k}^m}T_\alpha \partial^\alpha\in \mathscr{D}_{\mathsf{C}^\infty}^{(2k)}
(\IR^m; \IC^\ell, \IC^\ell).
$$
Here, $P^{\dagger}\in\IDD^{(k)}_{\mathsf{C}^{\infty}}(U;\IC^{\ell},\IC^{\ell})$ denotes the usual 
formal adjoint of $P$, which is well-defined by
$$
\int_{U} (P^{\dagger}\varphi_1,\varphi_2) \Id x=\int_U ( \varphi_1,P\varphi_2)\Id x,
$$
for all $\varphi_1,\varphi_2\in \mathsf{C}^{\infty}(U,\IC^{\ell})$ one of which having a compact 
support, in other words, $P^{\dagger}$ is nothing but the operator $P^{\mu,h_E,h_F}$ from Remark \ref{adjo}.1, with respect to the Lebesgue measure and the canonical Hermitian structures on the trivial bundles. By a standard partition of unity argument, it suffices to prove that if 
$\psi\in\mathsf{C}^{\infty}_{\mathrm{c}}(U)$ with 
\begin{align}\label{incl} 
\mathrm{supp}(\psi)\subset \mathrm{B}_{t_0}(x_0)\subset U 
\end{align}
for some $x_0\in U,\ t_0>0$ we have $\psi f\in\mathsf{B}^{k}_{1,\infty}(\IR^m,\IC^{\ell})$. 
The proof consists of two steps: 
We first construct a differential operator $Q^{\psi}$ which satisfies the assumptions of 
Proposition \ref{pr2}, and which coincides with $T$ near $\mathrm{supp}(\psi)$, and then we apply 
Proposition \ref{pr2} together with a maximality argument to $Q^{\psi}$ to deduce the thesis.\\
we can assume that there are $t_0>0$, $x_0\in U$ such that 
We also take some $\phi\in\mathsf{C}^{\infty}_{\mathrm{c}}(U)$ with $\phi=1$ on $\mathrm{B}_{t_0}(x_0)$, 
and for any $0<t<t_0$ we set
\[
C_{t}:=\max_{y\in \overline{\mathrm{B}_t(x_0)},\alpha\in \IN_{2k}^m} 
|T_{\alpha i  j}(y)-T_{\alpha i  j}(x_0)|,
\]
and we pick a $\chi_t\in\mathsf{C}^{\infty}_{\mathrm{c}}(\IR^2,\IR^2)$ with $\chi_t(z)=z$ for all $z$ 
with $|z|\leq C_t$, and $|\chi_t(z)|\leq 2C_t$ for all $z$. We define a differential operator
\begin{align*}
&Q^{(t)}=\sum_{\alpha \in \IN_{2k}^m}Q^{(t)}_{\alpha} 
\partial^\alpha\in \mathscr{D}_{\mathsf{C}^\infty}^{(2k)}(\IR^m; \IC^\ell, \IC^\ell),
\\
&Q^{(t)}_{\alpha i j}(x):= T_{\alpha i j}(x_0)+
\chi_t\big(\phi(x)(T_{\alpha i j}(x)-T_{\alpha i j}(x_0))\big)
\\
&\>\>\>\>\>\>=:T_{\alpha i j}(x_0)+A^{(t)}_{\alpha i j}(x)
\end{align*}
(with the usual extension of $\phi (T_{\alpha i j}-T_{\alpha i j}(x_0))$ to zero away from $U$ 
being understood, so in particular we have $Q^{(t)}_{\alpha i j}(x)= T_{\alpha i j}(x_0)$, if 
$x\in \IR^m\setminus U$). Let $\zeta\in\IR^m\setminus\{0\}$, $\eta\in\IC^\ell$ be arbitrary. 
Then using $\sigma_{T,x_0}=\sigma_{P,x_0}^{\dagger}\sigma_{P,x_0}$, and that 
$$
\IR^m\setminus\{0\}\ni \zeta\rq{}\longmapsto  \sigma_{P,x_0}(\mathrm{i}\zeta\rq{})=
\sum_{\alpha\in\IN^m_{k}, |\alpha|=k}P_{\alpha}(x_0)(\mathrm{i}\zeta\rq{})^{\alpha}\in 
\mathrm{GL}(\IC;\ell\times \ell)
$$
is well-defined and positively homogeneous of degree $k$, one finds
$$
\Re (\sigma_{T,x_0}(\mathrm{i}\zeta),\eta,\eta)= (\sigma_{T,x_0}(\mathrm{i}\zeta),\eta,\eta)\geq 
D_1|\zeta|^{2k} |\eta|^2,
$$
where
$$
D_1:=\min_{\zeta\rq{}\in\IR^m,\eta\rq{}\in\IC^\ell, |\zeta\rq{}|=1=
|\eta\rq{}|} |\sigma_{P,x_0}(\mathrm{i}\zeta\rq{})\eta\rq{}|^2>0.
$$
Furthermore, for $x\in U$ one easily gets
$$
\Re (\sigma_{A^{(t)},x}(\mathrm{i}\zeta),\eta,\eta)\geq 
-D(k,m)\max_{\alpha\in\IN^{m}_{2k}}|A^{(t)}_{\alpha}(x)||\zeta|^{2k} |\eta|^2,
$$
for some $D(k,m)>0$. From now one we fix some small $t$ such that 
$$
\sup_{x\in U}\max_{\alpha\in\IN^{m}_{2k}}|A^{(t)}_{\alpha}(x)|\leq D_1/(2D(k,m)).
$$
Then we get the estimate
$$
\Re (\sigma_{Q^{(t)}(\mathrm{i}\zeta),x},\eta,\eta)\geq\f{D_1}{2} |\zeta|^{2k} |\eta|^2
\text{  for all $x\in\IR^m$ },
$$
thus
$$
\left|\big(r^{2k}  +  \sigma_{Q^{(t)},x}(\mathrm{i}\xi)\big)^{-1}\right| \leq 
\min\{D_1/2,1\}(r + |\xi|)^{-2k}, 
$$
which is valid for all
$$
(x,\xi, r) \in \IR^m \times (\IR^m \times [0, \infty)) \setminus \{(0, 0)\}).
$$ 
In other words, $Q^{\psi}:=Q^{(t)}$ satisfies the assumptions of  Proposition \ref{pr2} with 
$\theta_0=\pi$, and by construction one has
\begin{align}\label{coin}
Q^{\psi}_{\alpha}=T_{\alpha}\text{ for all $\alpha\in\IN^{m}_{2k}$, in a open neighbourhood of 
$\mathrm{supp}(\psi)$}.
\end{align}
Since $ \mathsf{L}^1(\IR^m, \IC^\ell) \hookrightarrow \mathsf{B}^0_{1,\infty}(\IR^n, \IC^\ell)$, the 
assumption $f \in \mathsf{L}^1_{\mathrm{loc}} (U, \IC^\ell)$ implies
$$
\beta_0:= \left.\sup \big\{\beta\right|\>\beta\in\IR, \tilde{\psi} f \in 
\mathsf{B}^\beta_{1,\infty }(\IR^m, \IC^\ell)
\text{ for all $\tilde{\psi}\in\mathsf{C}^{\infty}_{\mathrm{c}}(U)$}\big\}\geq 0.
$$
We also know that $Pf \in \mathsf{L}^1_{\mathrm{loc}} (U, \IC^\ell)$. Then 
$P(\psi f) = \psi Pf + P_1 f$, where the commutator 
$P_1:=[P,\psi] \in \mathscr{D}_{\mathsf{C}^\infty}^{(k-1)}(U; \IC^\ell, \IC^\ell)$ has coefficients 
with compact support in $U$, and using (\ref{coin}) we get  
$$
Q^{\psi}(\psi f) = T(\psi f)= P^{\dagger} P(\psi f) = P^{\dagger} (\psi Pf) + P^{\dagger} P_1 f, 
$$
all equalities understood in the sense of distributions with compact support in $U$. We fix 
$R  \geq 0$ so large that the conclusions of Proposition \ref{pr2} hold for $Q=Q^{\psi}$, 
$\theta_0: = \pi$, $r = R$, 
$$
\beta \in \big\{-2k, \min\big\{\beta_0 +\f{1}{2}- 2k, -k\big\}\big\}.
$$
So $\psi f$ coincides with the unique solution 
$w$ in $\mathsf{B}^0_{1,\infty}(\IR^m, \IC^\ell)$ of 
\begin{equation}\label{eq2A}
R^{2k} w + Q^{\psi}w = R^{2k} \psi f + P^{\dagger} (\psi Pf) + P^{\dagger} P_1 f.
\end{equation}
On the other hand, as $\tilde{\psi} f \in \mathsf{B}^{\beta_0 - \frac{1}{2}}_{1,\infty}(\IR^m, \IC^\ell)$ 
for all $\tilde{\psi}\in\mathsf{C}^{\infty}_{\mathrm{c}}(U)$ (by the very definition of $\beta_0$), we get
$$
R^{2k} \psi f + P^{\dagger} (\psi Pf) + P^{\dagger} P_1 f \in 
\mathsf{B}^{\min\{-k,\beta_0+\frac{1}{2} -2k\}}_{1,\infty}(\IR^m, \IC^\ell).
$$
So (\ref{eq2A}) has a unique solution $\tilde{w}$ in 
$\mathsf{B}^{ \min\{\beta_0 + \frac{1}{2}, k\}}_{1,\infty}(\IR^m, \IC^\ell)$, 
evidently coinciding with $\psi f$, by the uniqueness of the solutions of (\ref{eq2A}) in the class 
$\mathsf{B}^0_{1,\infty}(\IR^m, \IC^\ell)$.  We deduce that 
$\psi f \in \mathsf{B}^{ \min\{\beta_0 + \frac{1}{2}, k\}}_{1,\infty}(\IR^m, \IC^\ell)$, so that, $\psi$ 
being arbitrary, $\min\big\{\beta_0 + \frac{1}{2}, k\big\} \leq \beta_0$, implying $k \leq \beta_0$ and 
$\min\{\beta_0 + \frac{1}{2}, k\} = k$. We have thus shown that 
$\psi f \in \mathsf{B}^k_{1,\infty}(\IR^m, \IC^\ell)$. 
\end{proof}

Keeping Remark \ref{adjo}.2 in mind, we immediately get the following characterization of local Sobolev spaces: 

\begin{Corollary} 
Let $E\to X$ be a smooth complex vector bundle, and let $k\in \IN_{\geq 0}$.\\
\emph{a)} If $p\in (1,\infty)$, then for any elliptic operator 
$Q\in \mathscr{D}_{\mathsf{C}^{\infty}}^{(k)}(X;E,E)$ one has 
\[
\Gamma_{\mathsf{W}^{k,p}_{\mathrm{loc}}}(X,E)=\Gamma_{\mathsf{W}^{Q,p}_{\mathrm{loc}}}(X,E).
\]
\emph{b)} For any elliptic $Q\in \mathscr{D}_{\mathsf{C}^{\infty}}^{(k+1)}(X;E,E)$ one has 
\[
\Gamma_{\mathsf{W}^{Q,1}_{\mathrm{loc}}}(X,E)\subset\Gamma_{\mathsf{W}^{k,1}_{\mathrm{loc}}}(X,E).
\] 
\end{Corollary}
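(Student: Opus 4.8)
The plan is to deduce the Corollary from Theorem \ref{regu} by a purely local argument, reducing everything to the Euclidean setting via charts and local frames. Fix a smooth complex vector bundle $E\to X$ of rank $\ell$, an integer $k\in\IN_{\geq 0}$, and an elliptic operator $Q\in\mathscr{D}^{(k)}_{\mathsf{C}^{\infty}}(X;E,E)$ (order $k$ in part a), $k+1$ in part b). The statement to be proven is an equality resp. inclusion of the \emph{local} spaces $\Gamma_{\mathsf{W}^{k,p}_{\mathrm{loc}}}(X,E)$ and $\Gamma_{\mathsf{W}^{Q,p}_{\mathrm{loc}}}(X,E)$; since both are defined by the requirement that the components with respect to every local frame over every chart lie in the corresponding scalar local Sobolev space, it suffices to work in a single chart $x\colon U\to\IR^m$ equipped with a local frame $e_1,\dots,e_\ell$ of $E$ over $U$. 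In this trivialization $Q$ becomes an operator $P=\sum_{\alpha\in\IN^m_{\kappa}}P_\alpha\partial^\alpha$ with $P_\alpha\colon x(U)\to\mathrm{Mat}(\IC;\ell\times\ell)$ smooth ($\kappa=k$ or $k+1$), and $P$ is elliptic in the sense of Theorem \ref{regu} because ellipticity of $Q$ is a coordinate-free, frame-independent condition on the principal symbol.

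For part a), the inclusion $\Gamma_{\mathsf{W}^{k,p}_{\mathrm{loc}}}(X,E)\subset\Gamma_{\mathsf{W}^{Q,p}_{\mathrm{loc}}}(X,E)$ is immediate and general: any $Q$ of order $k$ maps $\mathsf{W}^{k,p}_{\mathrm{loc}}$ into $\mathsf{L}^p_{\mathrm{loc}}$ since its coefficients are smooth (hence locally bounded), so no ellipticity is needed here. For the reverse inclusion, take $f\in\Gamma_{\mathsf{W}^{Q,p}_{\mathrm{loc}}}(X,E)$; in the chart, its component vector $(f^1,\dots,f^\ell)$ lies in $\mathsf{L}^p_{\mathrm{loc}}(x(U),\IC^\ell)$ and $P(f^1,\dots,f^\ell)\in\mathsf{L}^p_{\mathrm{loc}}(x(U),\IC^\ell)$ — this last fact uses Remark \ref{adjo}.2 to identify the distributional action of $Q$ with that of $P$ on components. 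Theorem \ref{regu} a) then gives $(f^1,\dots,f^\ell)\in\mathsf{W}^{k,p}_{\mathrm{loc}}(x(U),\IC^\ell)$, i.e. $f\in\Gamma_{\mathsf{W}^{k,p}_{\mathrm{loc}}}(X,E)$ locally, and since the chart and frame were arbitrary, $f\in\Gamma_{\mathsf{W}^{k,p}_{\mathrm{loc}}}(X,E)$. Part b) is the one-sided analogue: given $f\in\Gamma_{\mathsf{W}^{Q,1}_{\mathrm{loc}}}(X,E)$, the same trivialization argument puts $(f^1,\dots,f^\ell)$ and $P(f^1,\dots,f^\ell)$ in $\mathsf{L}^1_{\mathrm{loc}}(x(U),\IC^\ell)$ with $P$ elliptic of order $k+1$, so Theorem \ref{regu} b) yields $(f^1,\dots,f^\ell)\in\mathsf{W}^{(k+1)-1,1}_{\mathrm{loc}}(x(U),\IC^\ell)=\mathsf{W}^{k,1}_{\mathrm{loc}}(x(U),\IC^\ell)$, hence $f\in\Gamma_{\mathsf{W}^{k,1}_{\mathrm{loc}}}(X,E)$.

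The only genuinely non-trivial point — the new $\mathsf{L}^1$ Besov regularity — is entirely absorbed into Theorem \ref{regu}, which we are allowed to invoke. So the main "obstacle" here is merely bookkeeping: one must check that passing to a chart and a local frame is harmless, namely that (i) the scalarized operator $P$ is again a smooth differential operator of the same order, (ii) it is elliptic in the Euclidean sense precisely when $Q$ is elliptic as a bundle operator, and (iii) the identity "$Qf=g$ as distributional sections" is equivalent, frame by frame, to "$P(\text{components of }f)=(\text{components of }g)$ as distributions on the chart" — this is exactly the content of Remark \ref{adjo}.2, which the statement of the Corollary already flags ("Keeping Remark \ref{adjo}.2 in mind"). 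A mild subtlety worth a sentence is that local Sobolev membership is a property that can be tested on an \emph{arbitrary} cover by charts-with-frames, so it is enough to verify the conclusion on each chart of some fixed atlas. With these routine compatibilities noted, the Corollary follows directly, and the proof is short.

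\begin{proof}[Proof of Corollary] Both spaces in question are defined by a local condition on the frame components over charts, so it suffices to argue in a fixed chart $x\colon U\to\IR^m$ admitting a local frame $e_1,\dots,e_\ell$ of $E$. In this frame, $Q$ acts on component vectors as an operator $P=\sum_{\alpha}P_\alpha\partial^\alpha\in\IDD^{(\kappa)}_{\mathsf{C}^{\infty}}(x(U);\IC^\ell,\IC^\ell)$, where $\kappa=k$ in case a) and $\kappa=k+1$ in case b); since the principal symbol of $Q$ transforms as a bundle morphism, ellipticity of $Q$ is equivalent to ellipticity of $P$ in the sense of Theorem \ref{regu}. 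By Remark \ref{adjo}.2, for $f\in\Gamma_{\mathsf{L}^1_{\mathrm{loc}}}(X,E)$ the equation $Qf=g$ holds iff, in the frame over $U$, $P$ applied to the components of $f$ equals the components of $g$ as distributions on $x(U)$.

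\textbf{Part a).} If $f\in\Gamma_{\mathsf{W}^{k,p}_{\mathrm{loc}}}(X,E)$, then its components lie in $\mathsf{W}^{k,p}_{\mathrm{loc}}(x(U),\IC^\ell)$; applying $P$ (order $k$, smooth coefficients) keeps them in $\mathsf{L}^p_{\mathrm{loc}}$, so $Qf\in\Gamma_{\mathsf{L}^p_{\mathrm{loc}}}(X,E)$, giving $\Gamma_{\mathsf{W}^{k,p}_{\mathrm{loc}}}(X,E)\subset\Gamma_{\mathsf{W}^{Q,p}_{\mathrm{loc}}}(X,E)$. Conversely, if $f\in\Gamma_{\mathsf{W}^{Q,p}_{\mathrm{loc}}}(X,E)$, then in the chart the component vector $u:=(f^1,\dots,f^\ell)$ satisfies $u\in\mathsf{L}^p_{\mathrm{loc}}(x(U),\IC^\ell)$ and $Pu\in\mathsf{L}^p_{\mathrm{loc}}(x(U),\IC^\ell)$, so Theorem \ref{regu} a) gives $u\in\mathsf{W}^{k,p}_{\mathrm{loc}}(x(U),\IC^\ell)$. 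As the chart and frame were arbitrary, $f\in\Gamma_{\mathsf{W}^{k,p}_{\mathrm{loc}}}(X,E)$, and equality follows.

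\textbf{Part b).} Let $f\in\Gamma_{\mathsf{W}^{Q,1}_{\mathrm{loc}}}(X,E)$. In the chart, $u:=(f^1,\dots,f^\ell)\in\mathsf{L}^1_{\mathrm{loc}}(x(U),\IC^\ell)$ and $Pu\in\mathsf{L}^1_{\mathrm{loc}}(x(U),\IC^\ell)$, with $P$ elliptic of order $k+1$. Theorem \ref{regu} b) yields $u\in\mathsf{W}^{(k+1)-1,1}_{\mathrm{loc}}(x(U),\IC^\ell)=\mathsf{W}^{k,1}_{\mathrm{loc}}(x(U),\IC^\ell)$. Since the chart and frame were arbitrary, $f\in\Gamma_{\mathsf{W}^{k,1}_{\mathrm{loc}}}(X,E)$, proving $\Gamma_{\mathsf{W}^{Q,1}_{\mathrm{loc}}}(X,E)\subset\Gamma_{\mathsf{W}^{k,1}_{\mathrm{loc}}}(X,E)$.
\end{proof}
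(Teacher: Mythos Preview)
Your proof is correct and is exactly the natural elaboration of the paper's one-line justification (``Keeping Remark \ref{adjo}.2 in mind, we immediately get\ldots''): reduce to a chart with a local frame, use Remark \ref{adjo}.2 to identify the distributional action of $Q$ with that of the scalarized operator $P$, and apply Theorem \ref{regu} a) resp.\ b). The paper does not spell out any further details, so your write-up is in fact more explicit than the original while following the same route.
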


Our second main result is the following abstract Meyers-Serrin type theorem:

\begin{Theorem}\label{main} 
Let $p\in [1,\infty)$, $s\in\IN$, $k_1\dots,k_s\in\IN_{\geq 0}$, and let $E\to X$, $F_i\to X$, for 
each $i\in\{1,\dots,s\}$, be smooth Hermitian vector bundles, and let $\mathfrak{P}:=\{P_1,\dots,P_s\}$ 
with $P_{i}\in\mathscr{D}_{\mathsf{C}^{\infty}}^{(k_i)}(X;E,F_i)$ be such that in case
 $k:=\max\{k_1,\dots,k_s\}\geq 2$ one has 
$\Gamma_{\mathsf{W}^{\mathfrak{P},p}_{\mu}}(X,E)\subset \Gamma_{\mathsf{W}^{k-1,p}_{\mathrm{loc}}}(X,E)$. 
Then for any $f\in \Gamma_{\mathsf{W}^{\mathfrak{P},p}_{\mu}}(X,E) $ 
there is a sequence 
$$
(f_n)\subset \Gamma_{\mathsf{C}^{\infty}}(X,E)\cap 
\Gamma_{\mathsf{W}^{\mathfrak{P},p}_{\mu}}(X,E),
$$
which can be chosen in $\Gamma_{\mathsf{C}^{\infty}_{\mathrm{c}}}(X,E)$ if $f$ is compactly 
supported, such that $\left\| f_n-f\right\|_{\mathfrak{P},p,\mu}\to 0$ as $n\to\infty$.
\end{Theorem}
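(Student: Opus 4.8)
The plan is the classical Meyers--Serrin scheme --- localize by a partition of unity, apply Friedrichs mollifiers in charts, and sum up --- with the point that the regularity hypothesis (\ref{deds}) is exactly what makes the mollifier commutators converge (this is what Lemma \ref{molli} isolates at the local level). By paracompactness of $X$, fix a locally finite cover $(U_j)_{j\in\IN}$ by charts that simultaneously trivialize $E$ and every $F_i$, together with a subordinate smooth partition of unity $(\chi_j)$ with $\mathrm{supp}(\chi_j)\Subset U_j$, so $f=\sum_j\chi_jf$ (locally finite). Each $\chi_jf$ again lies in $\Gamma_{\mathsf{W}^{\mathfrak{P},p}_{\mu}}(X,E)$: from the Leibniz identity $P_i(\chi_jf)=\chi_j\,P_if+[P_i,\chi_j]f$, in which $[P_i,\chi_j]$ is a differential operator of order $<k_i$ with smooth coefficients supported in $U_j$, the term $[P_i,\chi_j]f$ is a compactly supported section of $F_i$ lying in $\mathsf{L}^p_{\mu}$ --- automatically when $k<2$ (the commutator then being of order $0$), and by the hypothesis $f\in\Gamma_{\mathsf{W}^{k-1,p}_{\mathrm{loc}}}(X,E)$ when $k\ge 2$. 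Hence it suffices to approximate one compactly supported $g:=\chi_jf$, supported in a single chart $U:=U_j$, by a sequence from $\Gamma_{\mathsf{C}^{\infty}_{\mathrm{c}}}(X,E)$ in the norm $\left\|\bullet\right\|_{\mathfrak{P},p,\mu}$.

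\emph{Mollification in a chart.} Reading off the components of $g$ in the chosen frame, $g$ is a compactly supported $\IC^{\ell_0}$-valued distribution on $U\subset\IR^m$ with $g\in\mathsf{W}^{k-1,p}_{\mathrm{loc}}$ (or $k<2$) and each $P_ig=\sum_{|\alpha|\le k_i}P_{i,\alpha}\partial^\alpha g\in\mathsf{L}^p_{\mathrm{loc}}$, the $P_{i,\alpha}$ being smooth matrix functions. Since $\mu=\mu_U\,\Id x$ with $0<\mu_U\in\mathsf{C}^{\infty}(U)$, and $\mu_U$ together with the Hermitian fibre metrics of $E$ and the $F_i$ are two-sidedly bounded on a fixed compact neighbourhood of $\mathrm{supp}(g)$, the norm $\left\|\bullet\right\|_{\mathfrak{P},p,\mu}$ on sections supported there is equivalent to the Euclidean graph norm, so it is enough to find $g_n\in\mathsf{C}^{\infty}_{\mathrm{c}}(U,\IC^{\ell_0})$ with $g_n\to g$ and $P_ig_n\to P_ig$ in $\mathsf{L}^p(\Id x)$. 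Take $g_\varepsilon:=\rho_\varepsilon*g$ for a standard mollifier $\rho_\varepsilon$; for $\varepsilon$ small, $g_\varepsilon\in\mathsf{C}^{\infty}_{\mathrm{c}}(U,\IC^{\ell_0})$ and $g_\varepsilon\to g$ in $\mathsf{L}^p$. Writing $P_ig_\varepsilon=\rho_\varepsilon*(P_ig)+[P_i,\rho_\varepsilon*]g$, the first term tends to $P_ig$ since $P_ig\in\mathsf{L}^p$ is compactly supported, and the commutator tends to $0$ in $\mathsf{L}^p_{\mathrm{loc}}$: expanding each $P_{i,\alpha}(\partial^\alpha g*\rho_\varepsilon)-(P_{i,\alpha}\partial^\alpha g)*\rho_\varepsilon$, the terms with $|\alpha|\le k-1$ are $O(\varepsilon)$ in $\mathsf{L}^p$ by the elementary bound $|(P_{i,\alpha}(h*\rho_\varepsilon)-(P_{i,\alpha}h)*\rho_\varepsilon)(x)|\le C\varepsilon\,(|h|*\rho_\varepsilon)(x)$ applied to $h=\partial^\alpha g\in\mathsf{L}^p_{\mathrm{loc}}$ (available by (\ref{deds}), vacuous when $k<2$), while the top-order terms $|\alpha|=k$ are handled by the classical Friedrichs commutator lemma: a Taylor expansion of $P_{i,\alpha}$ about the base point, together with the vanishing-moment structure of $\rho_\varepsilon$ and its derivatives, shows that the would-be leading contributions cancel exactly against the lower-order remainders, giving $o(1)$ in $\mathsf{L}^p_{\mathrm{loc}}$. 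Thus $P_ig_\varepsilon\to P_ig$ in $\mathsf{L}^p$.

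\emph{Reassembly.} Given $\eta>0$, use the above to pick for each $j$ a $g_j\in\mathsf{C}^{\infty}_{\mathrm{c}}(X,E)$ with $\mathrm{supp}(g_j)\subset U_j$ and $\left\|g_j-\chi_jf\right\|_{\mathfrak{P},p,\mu}<\eta\,2^{-j}$, and set $h:=\sum_jg_j$, a smooth section since the sum is locally finite, with $P_ih=\sum_jP_ig_j$. Since also $P_if=\sum_jP_i(\chi_jf)$ (locally finite sums), the pointwise bounds $|h-f|\le\sum_j|g_j-\chi_jf|$ and $|P_ih-P_if|\le\sum_j|P_ig_j-P_i(\chi_jf)|$ together with Minkowski's inequality give $\left\|h-f\right\|_{p,\mu}\le\sum_j\left\|g_j-\chi_jf\right\|_{p,\mu}$ and $\left\|P_ih-P_if\right\|_{p,\mu}\le\sum_j\left\|P_ig_j-P_i(\chi_jf)\right\|_{p,\mu}$, whence $\left\|h-f\right\|_{\mathfrak{P},p,\mu}\le(1+s)^{1/p}\eta$; in particular $h\in\Gamma_{\mathsf{C}^{\infty}}(X,E)\cap\Gamma_{\mathsf{W}^{\mathfrak{P},p}_{\mu}}(X,E)$. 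Letting $\eta=1/n$ gives the sequence $(f_n)$, and if $f$ is compactly supported only finitely many $\chi_jf$ are nonzero, so each $f_n$ has compact support.

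\emph{Main obstacle.} The one genuinely delicate point is the top-order part ($|\alpha|=k$) of the mollifier commutator: there the naive estimate only yields $O(1)$ in $\mathsf{L}^p$, and one must extract the precise cancellation --- via Taylor expansion and the moment identities for $\rho_\varepsilon$ and its derivatives --- that upgrades it to $o(1)$, and verify that this succeeds exactly when one order of differentiability below $\mathrm{ord}(P_i)$ is available, i.e. precisely under (\ref{deds}). Everything else (paracompactness and partitions of unity, the Leibniz/commutator identities, the norm comparisons on compacta, and the weighted locally finite summation) is routine bookkeeping.
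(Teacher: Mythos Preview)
Your proof is correct and follows essentially the same route as the paper: localize via a partition of unity in trivializing charts, show $\chi_j f\in\Gamma_{\mathsf{W}^{\mathfrak{P},p}_{\mu,\mathrm{c}}}$ using that $[P_i,\chi_j]$ has order $\le k-1$ together with the hypothesis $f\in\Gamma_{\mathsf{W}^{k-1,p}_{\mathrm{loc}}}$, mollify in each chart, and reassemble with a $2^{-j}$-summation. The only packaging difference is that the paper isolates the mollifier step as Proposition~\ref{molli} and proves it by a clean induction on the order (peeling off one derivative and invoking first-order Friedrichs), whereas you sketch it inline; your handling of the top-order terms would be sharper if you noted explicitly that $[P_{i,\alpha}\partial^\alpha,\rho_\varepsilon*]g=[P_{i,\alpha}\partial_j,\rho_\varepsilon*](\partial^{\hat\alpha}g)$ with $\partial^{\hat\alpha}g\in\mathsf{L}^p_{\mathrm{loc}}$, which is exactly the first-order Friedrichs commutator lemma you cite.
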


The following vector-valued and higher order result on Friedrichs mollifiers is the main tool for 
the proof of Theorem \ref{main}, and should in fact be of an independent interest. 

\begin{Proposition}\label{molli} 
Let $0\leq h\in\mathsf{C}^{\infty}_{\mathrm{c}}(\IR^m)$  be such that $h(x)=0$ for all $x$ with 
$|x|\geq 1$, $\int_{\IR^m} h(x)\Id x =1$. For any $\epsilon>0$ define 
$0\leq h_{\epsilon}\in \mathsf{C}^{\infty}_{\mathrm{c}}(\IR^m)$ by 
$h_{\epsilon}(x):=\epsilon^{-m}h(\epsilon^{-1}x)$. Furthermore, let $U\subset \IR^m$ be open, let 
$k\in \IN_{\geq 0}$, $\ell_0,\ell_1\in\IN$, $p\in [1,\infty)$, and let 
$P\in\IDD^{(k)}_{\mathsf{C}^{\infty}}(U;\IC^{\ell_0},\IC^{\ell_1})$,
\[
P= \sum_{\alpha\in \IN^m_{k} }  P_{\alpha}\partial^{\alpha}, 
\>\text{ with $P_{\alpha}:U\longrightarrow\mathrm{Mat}(\IC;\ell_0\times \ell_1)$ in
$\mathsf{C}^{\infty}$.}
\]
\emph{a)} Assume that $f\in \mathsf{L}^{p}_{\mathrm{loc}}(U,\IC^{\ell_0})$, 
$Pf\in \mathsf{L}^{p}_{\mathrm{loc}}(U,\IC^{\ell_1})$, and that either 
$k<2$ or $f\in\mathsf{W}^{k-1,p}_{\mathrm{loc}}(U,\IC^{\ell_0})$. Then one 
has $Pf_{\epsilon}\to Pf$ as $\epsilon\to 0+$ in 
$\mathsf{L}^{p}_{\mathrm{loc}}(U,\IC^{\ell_1})$, where for sufficiently small 
$\epsilon>0$ we have set
\[
f_{\epsilon}:=\int_{\IR^m} h_{\epsilon}(\bullet-y)f(y)\Id y
\in\mathsf{C}^{\infty}(U,\IC^{\ell_0}).
\]
\emph{b)} If $f\in\mathsf{C}^k(U,\IC^{\ell_0})$, then $Pf_{\epsilon}\to P f$ as 
$\epsilon\to 0+$, uniformly over each $V\Subset  U$.
\end{Proposition}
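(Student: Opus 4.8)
The plan is to treat this as a localization-plus-commutator problem. Since the statements are about $\mathsf{L}^p_{\mathrm{loc}}$ (resp. local uniform) convergence, it suffices to prove convergence on a fixed $V \Subset U$; fix an intermediate open set $W$ with $V \Subset W \Subset U$, and note that for $\epsilon$ small enough (smaller than $\mathrm{dist}(V,\partial W)$) the mollification $f_\epsilon$ is well-defined and smooth on $V$. The key algebraic identity is the commutator decomposition
\begin{align*}
P f_\epsilon - (Pf)_\epsilon = \sum_{\alpha \in \IN^m_k} \big( P_\alpha \,\partial^\alpha f_\epsilon - (P_\alpha \partial^\alpha f)_\epsilon \big) = \sum_{\alpha \in \IN^m_k} [P_\alpha \ast_\epsilon - \ast_\epsilon P_\alpha]\,\partial^\alpha f,
\end{align*}
where $\ast_\epsilon$ denotes convolution with $h_\epsilon$ and where one uses that $\partial^\alpha$ commutes with convolution. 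Because $(Pf)_\epsilon \to Pf$ in $\mathsf{L}^p_{\mathrm{loc}}$ (resp. locally uniformly if $Pf$ is continuous, which it is in case b)) by the standard scalar mollifier theorem applied componentwise, everything reduces to showing that the commutator terms tend to $0$ in the appropriate topology on $V$.

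For part b), the argument is short: if $f \in \mathsf{C}^k(U,\IC^{\ell_0})$ then each $\partial^\alpha f$, $|\alpha|\le k$, is continuous on $W$, each coefficient $P_\alpha$ is continuous on $W$, and the Friedrichs commutator lemma (convolution against $h_\epsilon$ commutes with multiplication by a continuous function in the limit $\epsilon \to 0$, uniformly on compacta) gives $P_\alpha (\partial^\alpha f)_\epsilon - (P_\alpha \partial^\alpha f)_\epsilon \to 0$ uniformly on $V$; summing over $\alpha$ and combining with $(Pf)_\epsilon \to Pf$ uniformly on $V$ yields the claim. For part a), one distinguishes the two hypotheses. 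If $k < 2$, then $k \in \{0,1\}$ and all the derivatives appearing, namely $\partial^\alpha f$ with $|\alpha| \le k \le 1$, are merely in $\mathsf{L}^p_{\mathrm{loc}}$; here one invokes the classical Friedrichs mollifier lemma in its $\mathsf{L}^p$ form: for $a \in \mathsf{C}^\infty(W)$ and $g \in \mathsf{L}^p_{\mathrm{loc}}(W)$ one has $a\,g_\epsilon - (a g)_\epsilon \to 0$ in $\mathsf{L}^p_{\mathrm{loc}}(W)$ — this is where the restriction $k<2$ (i.e. no genuine second-order behavior to control) matters. If instead $f \in \mathsf{W}^{k-1,p}_{\mathrm{loc}}(U,\IC^{\ell_0})$, then for every $\alpha$ with $|\alpha| \le k-1$ we have $\partial^\alpha f \in \mathsf{L}^p_{\mathrm{loc}}$, and the same $\mathsf{L}^p$ Friedrichs lemma handles those terms; the only genuinely top-order terms are those with $|\alpha| = k$, and for these we write $P_\alpha \partial^\alpha f_\epsilon - (P_\alpha \partial^\alpha f)_\epsilon$ in a form that moves derivatives around — integrating by parts inside the convolution to land the extra derivative on $h_\epsilon$ or on $P_\alpha$ — so as to express the commutator using only $\partial^\beta f$ with $|\beta|\le k-1$ together with $Pf$; this is the standard Friedrichs second commutator estimate, and it produces an $\mathsf{L}^p_{\mathrm{loc}}$-bounded quantity converging to $0$.

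The main obstacle is exactly the top-order commutator in case a) under the hypothesis $f \in \mathsf{W}^{k-1,p}_{\mathrm{loc}}$: one does not have $\partial^\alpha f \in \mathsf{L}^p_{\mathrm{loc}}$ for $|\alpha|=k$, so the naive componentwise mollifier lemma does not apply to those terms, and one must instead exploit that the specific combination $\sum_{|\alpha|=k} P_\alpha \partial^\alpha f$ is controlled (it is the top-order part of $Pf$, and $Pf \in \mathsf{L}^p_{\mathrm{loc}}$). The trick is to write $P f_\epsilon = (Pf)_\epsilon + [\,\text{commutator}\,]$ and to verify, via Leibniz expansion of $\partial^\alpha(h_\epsilon \ast f)$ against $P_\alpha$, that the commutator only ever involves $f$ differentiated at most $k-1$ times — each commutator $[\,P_\alpha, \ast_\epsilon\,]\partial^\alpha$ gains one derivative off of $f$ and onto the smooth data — so that the $\mathsf{W}^{k-1,p}_{\mathrm{loc}}$ hypothesis is precisely what is needed, and then the classical Friedrichs commutator convergence (in $\mathsf{L}^p_{\mathrm{loc}}$) finishes the proof. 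One should carry this out with care about the cut-off $\phi \in \mathsf{C}^\infty_{\mathrm c}(W)$ equal to $1$ on a neighborhood of $\overline V$ so that all coefficients are globally defined and bounded with bounded derivatives, making the constants in the Friedrichs estimates uniform.
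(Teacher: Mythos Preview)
Your proposal is correct, and the underlying mechanism---reducing the top-order terms to the classical first-order Friedrichs commutator lemma applied to $g=\partial^{\alpha-e_j}f\in\mathsf{L}^p_{\mathrm{loc}}$---is the same as in the paper. The organization differs, however. The paper argues by induction on $k$: it rewrites $Pf=\sum_j\sum_{\alpha\in J_j}\partial_j(P_\alpha\,\partial^{\alpha-e_j}f)+Qf$ with $Q$ of order $\le k-1$, handles $Q$ by the inductive hypothesis, and then invokes the first-order Friedrichs theorem for the remaining terms. You instead write $Pf_\epsilon-(Pf)_\epsilon$ as a sum of commutators $[P_\alpha,\ast_\epsilon]\partial^\alpha f$ and treat them term-by-term: zeroth-order convergence for $|\alpha|\le k-1$, and for $|\alpha|=k$ the identity $[P_\alpha,\ast_\epsilon]\partial^\alpha f=[P_\alpha\partial_j,\ast_\epsilon](\partial^{\alpha-e_j}f)$ followed by the first-order Friedrichs commutator lemma. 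Both routes land on the same estimate; the induction packaging is slightly cleaner bookkeeping, while your commutator packaging makes more transparent why exactly the $\mathsf{W}^{k-1,p}_{\mathrm{loc}}$ hypothesis is the sharp one. One small point: in your write-up you say the top-order commutator can be expressed ``using only $\partial^\beta f$ with $|\beta|\le k-1$ together with $Pf$''---in fact $Pf$ is not needed there (it only enters through $(Pf)_\epsilon\to Pf$), so you can drop that clause. For part b) the paper's argument is more direct than yours: since $f\in\mathsf{C}^k$ one simply uses $\partial^\alpha(f_\epsilon)=(\partial^\alpha f)_\epsilon\to\partial^\alpha f$ locally uniformly and multiplies by the continuous coefficients, with no commutator analysis needed.
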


\begin{proof} a) We prove the statement by an induction argument on the order of the operator similar 
to that in \cite[Appendix A]{Br}. The case $k=0$ is an elementary property of convolution, the 
case $k=1$ is the classical Friedrichs\rq{} theorem, see \cite{friedrichs}. Therefore, let $k\geq 2$ and 
assume that the result is true for operators of order at most $k-1$, and also that at least for 
some $\alpha\in\IN^m_k$ with $|\alpha|=k$ we have $P_\alpha\neq 0$. For 
$j\in\{1,\ldots,m\}$, let $e_j\in\IN^m_1$ be the $j$-th element of the canonical basis of $\IR^m$, set 
\[
J_j=\left.\big\{\alpha\right| \alpha\in\IN^m_k,\ |\alpha|=k,\ \alpha_j\geq 1\big\}, 
\]
and for $\alpha\in J_j$, set $\hat{\alpha}_j=\alpha-e_j$. For every 
$f\in\mathsf{W}^{k-1,p}_{\mathrm{loc}}(U,\IC^{\ell_0})$ such that 
$Pf\in \mathsf{L}^{p}_{\mathrm{loc}}(U,\IC^{\ell_1})$, $j\in\{1,\ldots,m\}$ with 
$J_j\neq\emptyset$ and $\alpha\in J_j$ we may write $g_j=\partial^{\hat{\alpha}_j}f$, and
$$
Pf = \sum_{j=1}^m \sum_{\alpha\in J_j} \partial_j(P_\alpha g_j) + Q f,
\> \text{ where $Q\in\IDD^{(k-1)}_{\mathsf{C}^{\infty}}(U;\IC^{\ell_0},\IC^{\ell_1})$.}
$$
By the induction hypothesis, $Qf_\epsilon \to Qf$ in $\mathsf{L}^{p}_{\mathrm{loc}}(U,\IC^{\ell_1})$ 
as $\epsilon\to 0+$. Moreover, by assumption $g_j\in \mathsf{L}^{p}_{\mathrm{loc}}(U,\IC^{\ell_0})$ 
for every $j$, hence $(g_j)_\epsilon \to g_j$ in $\mathsf{L}^{p}_{\mathrm{loc}}(U,\IC^{\ell_0})$ and as 
a consequence $(P_\alpha g_j)_\epsilon\to P_\alpha g_j$ in $\mathsf{L}^{p}_{\mathrm{loc}}(U,\IC^{\ell_1})$, 
so that $Pf_\epsilon \to Pf$ in $\mathsf{L}^{p}_{\mathrm{loc}}(U,\IC^{\ell_1})$ by Friedrichs' theorem, and 
the proof is complete.  \\
b) This follows from the following two well-known facts:  Firstly, if $f\in\mathsf{C}^{k}(U,\IC^{\ell_0})$, 
then $\partial^{\alpha}(f_{\epsilon})=(\partial^{\alpha}f)_{\epsilon}$ for all $\alpha\in\IN^m_k$ and 
all sufficiently small $\epsilon>0$. Secondly, if $g\in\mathsf{C}(U,\IC^{\ell_0})$, then for every 
$V\Subset U$ 
$$
\sup_{x\in V}|g_{\epsilon}(x)- g(x)|\to 0 \>\text{ as $\epsilon\to 0+$.}
$$
This completes the proof.
\end{proof}

\begin{proof}[Proof of Theorem \ref{main}.] Let 
$$
\ell_0:=\mathrm{rank}(E), \>\ell_j:=\mathrm{rank}(F_j),\>\> \text{ for any
$j\in\{1,\dots,s\}$.} 
$$
We take a relatively compact, locally finite atlas $\bigcup_{n\in\IN  }U_n=X$ such that 
each $U_n$ admits smooth orthonormal frames for 
\[
E\longrightarrow X, F_1\longrightarrow X,\dots,F_s\longrightarrow X.
\]
Let $(\varphi_n)$ be a partition of unity which is subordinate to $(U_n)$, that is, 
$$
0\leq \varphi_n\in\mathsf{C}^{\infty}_{\mathrm{c}}(U_n), \>\sum_n\varphi_n (x)=1
\>\text{ for all $x\in X$,}
$$
where the latter is a locally finite sum. Now let $f\in \Gamma_{\mathsf{W}^{\mathfrak{P},p}_{\mu}}(X,E)$, 
and $f_n:=\varphi_n f$. Let us first show that 
$f_n\in \Gamma_{\mathsf{W}^{\mathfrak{P},p}_{\mu,\mathrm{c}}}(U_n,E)$. 
Indeed, let $j\in\{1,\dots,s\}$. Then as elements of $\Gamma_{\mathsf{D}'}(U_n,E)$ one has 
$$
P_{j} f_n= \varphi_n P_{j}f+[P_j,\varphi_n] f,\>\text{ but }
\>[P_j,\varphi_n]\in \mathscr{D}^{k_j-1}_{\mathsf{C}^{\infty}}(U_n;E,F_j), 
$$
and as we have $f\in \Gamma_{\mathsf{W}^{k-1,p}_{\mathrm{loc}}}(X,E)$, it follows that
$$
\left(\partial^{\alpha}f_1,\dots,\partial^{\alpha}f_{\ell_0}\right)
\in\mathsf{L}^p_{\mathrm{loc}}(U_n,\IC^{\ell_0})
\>\text{ for all $\alpha\in\IN^m_{k-1}$,}
$$
where the $f_j$\rq{}s are the components of $f$ with respect to the smooth orthonormal frame on 
$U_n$ for $E$. Thus we get 
$$
[P_j,\varphi_n] f\in \Gamma_{\mathsf{W}^{\mathfrak{P},p}_{\mu,\mathrm{c}}}(U_n,E)
$$
as the coefficients of $[P_j,\varphi_n]$ have a compact support in $U_n$ and 
$0<\mu_{U_n}\in\mathsf{C}^{\infty}(U_n)$, and the proof of 
$f_n\in \Gamma_{\mathsf{W}^{\mathfrak{P},p}_{\mu,\mathrm{c}}}(U_n,E)$ is complete. 
But now, given $\epsilon >0$, we may appeal to Proposition \ref{molli} a) to pick an 
$f_{n,\epsilon}\in\Gamma_{\mathsf{C}^{\infty}_{\mathrm{c}}}(X,E)$ with support in $U_n$ such that 
$$
\left\|f_n- f_{n,\epsilon}\right\|_{\mathfrak{P},p,\mu}<\epsilon/2^{n+1}.
$$
Finally, $f_{\epsilon}(x):=\sum_n f_{n,\epsilon}(x)$, $x\in X$, is a locally finite sum and 
thus defines an element in $\Gamma_{\mathsf{C}^{\infty}}(X,E)$ which satisfies
$$
\left\|f_{\epsilon}-f\right\|_{\mathfrak{P},p,\mu}\leq 
\sum^{\infty}_{n=1}\left\|f_{n,\epsilon}-f_n\right\|_{\mathfrak{P},p,\mu}<\epsilon,
$$ 
which proves the first assertion. If $f$ is compactly supported, then picking a \emph{finite} covering 
of the support of $f$ with $U_n\rq{}s$ as above, the above proof also shows the second assertion.
\end{proof}

We close this section with the following example which shows that the assumptions of Theorem \ref{main} 
are optimal in a certain sense:

\begin{Example}\label{bep} 
Consider the third order differential operator 
$$
A:=-x\partial^3+(x-1)\partial^2=(1-\partial)\circ x\circ \partial^2\in\IDD^{(3)}_{\mathsf{C}^{\infty}}(\IR)
$$
on $\IR$ (with its Lebesgue measure). \emph{Then for any $p\in (1,\infty)$ one has 
$$
\mathsf{W}^{A,p}(\IR)\subset \mathsf{W}^{1,p}_{\mathrm{loc}}(\IR), \>\mathsf{W}^{A,p}(\IR)
\not\subset \mathsf{W}^{2,p}_{\mathrm{loc}}(\IR)
$$
and $\mathsf{W}^{A,p}(\IR)\cap \mathsf{C}^{\infty}(\IR)$ is not dense in $\mathsf{W}^{A,p}(\IR)$:} 
Indeed, we first observe that
$$
\mathsf{W}^{A,p}(\IR) = \{u | u\in\mathsf{L}^p(\IR) , x \partial^2 u \in \mathsf{W}^{1,p}(\IR)\}. 
$$
To see this, if $f = Au$ and $v = x \partial^2u$, $v \in \mathsf{S}'(\IR)$, $(1 - \partial) v = f$, 
so that $(1 - i\xi) \hat v = \hat f$, so that 
$v = \mathcal{F}^{-1} [(1 - i\xi)^{-1} \hat f]  \in \mathsf{W}^{1,p}(\IR)$. 
Here, $\mathcal{F}$ is the Fourier transformation and $\hat \Psi:=\mathcal{F}\Psi$.\\
Next we show $\mathsf{W}^{A,p}(\IR)\subset\mathsf{W}^{1,p}_{\mathrm{loc}}(\IR)$. In fact, let 
$u \in \mathsf{W}^{A,p}(\IR)$ and set $x \partial^2 u = g \in \mathsf{W}^{1,p}(\IR)$. We write $g$ 
in the form $g = g(0) + \int_0^x \partial g(y)\Id y$. Then
$$
\partial^2u(x) = \frac{g(0)}{x} + h(x), \quad x \in \IR \setminus \{0\}, 
$$
with $h(x) = \frac{1}{x} \int_0^x \partial g(y) \Id y$. As $p > 1$, it is a well known consequence of 
Hardy's inequality that $h \in \mathsf{L}^p(\IR)$. So
$$
\partial^2u = g(0) p. v. \left(\frac{1}{x}\right) + h + k, 
$$
with $k \in \mathsf{D}'(\IR)$, $\mathrm{supp}(k) \subseteq \{0\}$. We deduce that 
$$
g(x) = g(0) + x h(x) + x k(x),
$$
implying $x k(x) = 0$. From $k(x) = \sum_{j=0}^m a_j \delta^{(j)}$ it follows that  
$x k(x) = - \sum_{j=1}^m j a_j \delta^{(j-1)} = 0$ if and only if $k(x) = a_0 \delta$, whence 
$$
\partial^2u = g(0) p. v. \left(\frac{1}{x}\right) + h + a_0 \delta,
$$
so that
$$
\partial u(x) = g(0) \ln(|x|) + \int_0^x \partial g(y)\Id y + a_0 H(x) + C 
\in \mathsf{L}^p_{\mathrm{loc}}(\IR),
$$
where $H$ is teh Heaviside function, and we have proved that 
$\mathsf{W}^{A,p}(\IR) \subset \mathsf{W}^{1,p}_{\mathrm{loc}}(\IR)$. \\
In order to see $\mathsf{W}^{A,p}(\IR)\not\subset\mathsf{W}^{2,p}_{\mathrm{loc}}(\IR)$, 
consider the function $u(x) = \phi(x) \ln(|x|)$, with $\phi \in \mathsf{C}_c^{\infty}(\IR)$, $\phi(x) = x$ 
in some neighbourhood of $0$. Then $x \partial^2 u \in \mathsf{W}^{1,p}(\IR)$, but 
$u \not \in \mathsf{W}^{2,p}_{\mathrm{loc}}(\IR)$, since one has
\[
\partial^2 u(x) = p. v. \left(\frac{1}{x}\right)
\]
in a neighborhood of $0$. So Theorem 2.8 is not applicable. \\
To see that $\mathsf{W}^{A,p}(\IR) \cap \mathsf{C}^{\infty}(\IR)$ is not dense in $\mathsf{W}^{A,p}(\IR)$, 
let again $u(x):= \phi(x) \ln(|x|)$ with $\phi$ as above. Assume (by contradiction) that there exists 
$(u_n)_{n \in \IN}$ with $u_n \in \mathsf{W}^{A,p}(\IR) \cap \mathsf{C}^\infty(\IR)$, such that
$$
\|u_n - u\|_{\mathsf{L}^p(\IR)} + \|Au_n - Au\|_{\mathsf{L}^p(\IR)} \to 0 \text{ as $n \to \infty$}. 
$$
We set $v = x \partial^2 u$, $v_n = x \partial^2 u_n$. Then 
$$
\|v_n - v\|_{\mathsf{L}^p(\IR)}+\|\partial v_n - \partial  v\|_{\mathsf{L}^p(\IR)} \to 0
\text{ as $n \to \infty$},
$$
so that (considering the continuous representative of any $\mathsf{W}^{1,p}(\IR)$ equivalence class) 
$v_n (0) \to v(0)$. However, one has $v_n(0) = 0$ for all $n\in \IN$, while $v(0) = 1$, a contradiction. 
\end{Example}

\section{Applications of Theorem \ref{main}}

\subsection{The elliptic case}

Theorem \ref{regu} in combination with Remark \ref{adjo}.2 for formal adjoints immediately imply: 

\begin{Corollary}\label{ell} Let $s\in\IN$, $k_1\dots,k_s\in\IN_{\geq 0}$, let
$E\to X$, $F_i\to X$, $i\in\{1,\dots,s\}$, be smooth Hermitian vector bundles, and let 
$\mathfrak{P}:=\{P_1,\dots,P_s\}$ with $P_{i}\in\mathscr{D}_{\mathsf{C}^{\infty}}^{(k_i)}(X;E,F_i)$, 
and let $k:=\max\{k_1,\dots,k_s\}$. \vspace{1mm}

\emph{a)} Let $p\in (1,\infty)$. If one either has $k<2$, or the existence of some $j\in\{1,\dots,s\}$ 
with $P_j$ elliptic and $k_j\geq k-1$, then the assumptions from  Theorem \ref{main} are satisfied by 
$\mathfrak{P}$, in particular for any $f\in \Gamma_{\mathsf{W}^{\mathfrak{P},p}_{\mu}}(X,E) $ 
there is a sequence 
$$
(f_n)\subset \Gamma_{\mathsf{C}^{\infty}}(X,E)\cap 
\Gamma_{\mathsf{W}^{\mathfrak{P},p}_{\mu}}(X,E),
$$
which can be chosen in $\Gamma_{\mathsf{C}^{\infty}_{\mathrm{c} }}(X,E)$ if $f$ is compactly 
supported, such that $\left\| f_n-f\right\|_{\mathfrak{P},p,\mu}\to 0$ as $n\to\infty$. \\
\emph{b)} If one either has $k<2$, or the existence of some $j\in\{1,\dots,s\}$ with $P_j$ 
elliptic and $k_j=k$, then the assumptions from  Theorem \ref{main} are satisfied by 
$\mathfrak{P}$, in particular for any $f\in \Gamma_{\mathsf{W}^{\mathfrak{P},1}_{\mu}}(X,E) $ 
there is a sequence 
$$
(f_n)\subset \Gamma_{\mathsf{C}^{\infty}}(X,E)\cap 
\Gamma_{\mathsf{W}^{\mathfrak{P},1}_{\mu}}(X,E),
$$
which can be chosen in $\Gamma_{\mathsf{C}^{\infty}_{\mathrm{c}}}(X,E)$ if $f$ is compactly 
supported, such that $\left\| f_n-f\right\|_{\mathfrak{P},1,\mu}\to 0$ as $n\to\infty$.
\end{Corollary}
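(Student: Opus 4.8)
The plan is to deduce both parts directly from the local elliptic regularity of Theorem \ref{regu} together with the abstract Meyers--Serrin Theorem \ref{main}. In either part, the case $k<2$ is already covered by Theorem \ref{main}, so I would assume $k\geq 2$ and that there is a $j\in\{1,\dots,s\}$ with $P_j$ elliptic and, respectively, $k_j\geq k-1$ in part a) or $k_j=k$ in part b). By Theorem \ref{main} it then suffices to verify the single hypothesis
\[
\Gamma_{\mathsf{W}^{\mathfrak{P},p}_{\mu}}(X,E)\subset \Gamma_{\mathsf{W}^{k-1,p}_{\mathrm{loc}}}(X,E)
\]
(with $p=1$ in part b); once this is known, Theorem \ref{main} produces the asserted approximating sequences, compactly supported when $f$ is.

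So fix $f\in\Gamma_{\mathsf{W}^{\mathfrak{P},p}_{\mu}}(X,E)$; membership in $\Gamma_{\mathsf{W}^{k-1,p}_{\mathrm{loc}}}(X,E)$ is a local statement. I would pick an arbitrary chart $U$ admitting smooth local frames for $E$ and for $F_j$ (ellipticity of $P_j$ forces $\mathrm{rank}(F_j)=\mathrm{rank}(E)=:\ell$ by Propandef b), write $f=\sum_{i=1}^{\ell}f^i e_i$ in $U$ and let $(f^1,\dots,f^\ell)\in\mathsf{L}^p_{\mathrm{loc}}(U,\IC^\ell)$ be the corresponding component vector. By Remark \ref{adjo}.2 the intrinsic identity ``$P_j f$ equals the $\mathsf{L}^p_{\mu}$-section representing it'' may be tested against compactly supported smooth sections using \emph{any} triple $(\mu,h_E,h_{F_j})$, in particular the Lebesgue measure and the standard Hermitian structures on the trivialized bundles; this shows that, in these coordinates, the component vector is mapped by the Euclidean operator $P:=\sum_{\alpha\in\IN^m_{k_j}}(P_j)_\alpha\partial^\alpha\in\IDD^{(k_j)}_{\mathsf{C}^{\infty}}(U;\IC^\ell,\IC^\ell)$ — which is elliptic of order $k_j$ — into $\mathsf{L}^p_{\mathrm{loc}}(U,\IC^\ell)$.

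Now Theorem \ref{regu} applies to $P$ and $(f^1,\dots,f^\ell)$. In part a), $p\in(1,\infty)$, Theorem \ref{regu} a) gives $(f^1,\dots,f^\ell)\in\mathsf{W}^{k_j,p}_{\mathrm{loc}}(U,\IC^\ell)\subset\mathsf{W}^{k-1,p}_{\mathrm{loc}}(U,\IC^\ell)$ since $k_j\geq k-1$; in part b), $p=1$, Theorem \ref{regu} b) gives $(f^1,\dots,f^\ell)\in\mathsf{W}^{k_j-1,1}_{\mathrm{loc}}(U,\IC^\ell)=\mathsf{W}^{k-1,1}_{\mathrm{loc}}(U,\IC^\ell)$ since $k_j=k$. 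As $U$ was arbitrary, $f\in\Gamma_{\mathsf{W}^{k-1,p}_{\mathrm{loc}}}(X,E)$, which is exactly the hypothesis of Theorem \ref{main}, and we are done. The only genuinely delicate point is the translation in the second paragraph: one must be sure that the intrinsic condition $P_j f\in\Gamma_{\mathsf{L}^p_{\mu}}(X,F_j)$ of Definition \ref{asa} really becomes, in a frame over a chart, the Euclidean hypothesis ``$Pf\in\mathsf{L}^p_{\mathrm{loc}}$'' needed by Theorem \ref{regu}, and this is precisely what the invariance of the weak formulation under the choice of $(\mu,h_E,h_{F_j})$ in Remark \ref{adjo}.2 is designed to supply; everything else is a direct citation of results already proved.
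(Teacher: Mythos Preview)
Your proposal is correct and follows exactly the route the paper indicates: the paper merely states that ``Theorem \ref{regu} in combination with Remark \ref{adjo}.2 for formal adjoints immediately imply'' the corollary, and your argument spells out precisely this implication---localizing in a chart with frames, invoking Remark \ref{adjo}.2 to pass to the Euclidean weak formulation, and then applying Theorem \ref{regu} a) or b) to verify the regularity hypothesis of Theorem \ref{main}.
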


\subsection{A covariant Meyers-Serrin Theorem on arbitrary Rieman\-nian manifolds}\label{meyser}

The aim of this section is to apply Theorem \ref{main} in the context of covariant Sobolev spaces on 
Riemannian manifolds, which have been considered in this full generality, for example in \cite{salomonsen}, 
and in the scalar case, in \cite{aubin, hebey}. The point we want to make here is that Theorem \ref{main} 
can be applied in many situations, even if none of the underlying $P_j$\rq{}s is elliptic.\\
Let us start by recalling (cf. Section 3.3.1 in \cite{nico}) that if $E_j\to X$ is a smooth vector bundle 
and
$$
\nabla_j\in \mathscr{D}_{\mathsf{C}^{\infty}}^{(1)}\left(X;E_j,\mathrm{T}^* X \otimes E_j\right)
$$
a covariant derivative on $E_j\to X$ for $j=1,2$, then one defines the \emph{tensor
covariant derivative of $\nabla_1$ and $\nabla_2$} as the uniquely determined covariant derivative 
$$
\nabla_1\tilde{\otimes}\nabla_2\in  \mathscr{D}_{\mathsf{C}^{\infty}}^{(1)}
\left(X;E_1\otimes E_2,\mathrm{T}^* X \otimes E_1\otimes E_2\right)
$$
on $E_1\otimes E_2 \to X$ which satisfies 
\begin{align}
\nabla_1\tilde{\otimes}\nabla_2(f_1\otimes f_2)=\nabla_1(f_1)\otimes f_2+
f_1\otimes\nabla_2(f_2) \label{product}
\end{align}
for all $f_1\in\Gamma_{\mathsf{C}^{\infty}}(X,E_1)$,
$f_2\in\Gamma_{\mathsf{C}^{\infty}}(X,E_2)$ (the canonical isomorphism of 
$\mathsf{C}^{\infty}(X)$-modules 
$$
\Gamma_{\mathsf{C}^{\infty}}\left(X,\mathrm{T}^* X \otimes E_1\otimes E_2\right)
\longrightarrow \Gamma_{\mathsf{C}^{\infty}}\left(X,\mathrm{T}^* X \otimes E_2\otimes E_1\right)
$$
being understood).\\
Now let $(M,g)$ be a possibly noncompact smooth Riemannian manifold without boundary and let 
$\mu(\Id x)=\mathrm{vol}_g(\Id x)$ be the Riemannian volume measure. We also give ourselves 
a smooth Hermitian vector bundle $E\to M$ and let $\nabla$ be a Hermitian covariant derivative 
defined on the latter bundle. We denote the Levi-Civita connection on $\mathrm{T}^*M$ with
$\nabla_{g}$.  Then for any $j\in\IN$, the operator
$$
\nabla^{(j)}_g\in \mathscr{D}_{\mathsf{C}^{\infty}}^{(1)}
\big(M; \left( \mathrm{T}^*M\right)^{\otimes j-1} \otimes E, 
\left( \mathrm{T}^*M\right)^{\otimes j} \otimes E\big)
$$ 
is defined recursively by $\nabla^{(1)}_g:=\nabla$, 
$\nabla^{(j+1)}_g:=\nabla^{(j)}_g\tilde{\otimes} \nabla_{g}$, and we can further set
$$
\nabla^{j}_g:=\nabla^{(j)}_g\cdots \nabla^{(1)}_g\in 
\mathscr{D}_{\mathsf{C}^{\infty}}^{(j)}\big(M;E , \left(\mathrm{T}^*M\right)^{\otimes j} \otimes E\big)
$$
Note that if $\dim(M)>1$, then each $\nabla^{j}_g$ is nonelliptic. The following result makes 
Theorem \ref{main} accessible to covariant Riemannian Sobolev spaces:

\begin{Lemma}\label{formel2} 
Let $E\rq{}\to X$ be a smooth complex vector bundle with a covariant derivative $\nabla\rq{}$ 
defined on it. Then for any $p\in [1,\infty)$ one has 
$\Gamma_{\mathsf{W}^{\nabla\rq{},p}_{\mathrm{loc}}}(X,E\rq{})=
\Gamma_{\mathsf{W}^{1,p}_{\mathrm{loc}}}(X,E\rq{})$.
\end{Lemma}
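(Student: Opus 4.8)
The plan is to reduce the statement to a purely local claim and then to a coordinate computation. Since both $\Gamma_{\mathsf{W}^{\nabla',p}_{\mathrm{loc}}}(X,E')$ and $\Gamma_{\mathsf{W}^{1,p}_{\mathrm{loc}}}(X,E')$ are defined by requiring membership in the corresponding local spaces over every chart $U$ admitting a smooth local frame, it suffices to fix such a chart $U$ with a frame $e_1,\dots,e_{\ell}$ for $E'$ and show that for $f=\sum_i f^i e_i$ one has $(f^1,\dots,f^{\ell})\in\mathsf{W}^{1,p}_{\mathrm{loc}}(U,\IC^{\ell})$ if and only if $\nabla' f\in\Gamma_{\mathsf{L}^p_{\mathrm{loc}}}(U,\mathrm{T}^*X\otimes E')$ (all equalities understood in $\Gamma_{\mathsf{D}'}$).

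The key computation is the local expression for $\nabla'$. Writing the Christoffel-type symbols $\nabla' e_i = \sum_{j}\omega^j_i\otimes e_j$ with $\omega^j_i\in\Gamma_{\mathsf{C}^{\infty}}(U,\mathrm{T}^*X)$, the Leibniz rule gives
\[
\nabla' f = \sum_{i} df^i\otimes e_i + \sum_{i,j} f^i\,\omega^j_i\otimes e_j
         = \sum_{j}\Big(df^j + \sum_i f^i\omega^j_i\Big)\otimes e_j.
\]
Thus, in the chosen frame, $\nabla' f$ is a first-order differential operator applied to $(f^1,\dots,f^{\ell})$ whose principal part is exactly the gradient (the de Rham differential component-wise) and whose zeroth-order part has smooth coefficients. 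I would then argue the equivalence in two directions. If $f\in\mathsf{W}^{1,p}_{\mathrm{loc}}$, i.e. all $f^i$ and $\partial^\alpha f^i$ ($|\alpha|\le 1$) are in $\mathsf{L}^p_{\mathrm{loc}}(U)$, then the displayed formula shows each component of $\nabla' f$ is a finite $\mathsf{C}^{\infty}(U)$-linear combination of $f^i$ and $\partial_k f^i$, hence in $\mathsf{L}^p_{\mathrm{loc}}$; so $f\in\Gamma_{\mathsf{W}^{\nabla',p}_{\mathrm{loc}}}$. Conversely, if $\nabla' f\in\Gamma_{\mathsf{L}^p_{\mathrm{loc}}}$, then from the same formula $df^j = (\nabla' f)\text{-component}_j - \sum_i f^i\omega^j_i$, and since $f^i\in\mathsf{L}^p_{\mathrm{loc}}$ and $\omega^j_i$ is smooth, the right-hand side is in $\mathsf{L}^p_{\mathrm{loc}}$; hence each $\partial_k f^j\in\mathsf{L}^p_{\mathrm{loc}}(U)$, so $f\in\mathsf{W}^{1,p}_{\mathrm{loc}}(U,\IC^{\ell})$. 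This establishes the equality of the two local spaces over $U$, and therefore globally.

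There is essentially no hard obstacle here; this is a bookkeeping lemma. The only point requiring a little care is that all the identities must be read distributionally (the components $f^i$ are merely in $\mathsf{L}^p_{\mathrm{loc}}$ a priori), so one should invoke that multiplication by a smooth function and the de Rham differential are continuous on $\Gamma_{\mathsf{D}'}$ and commute with the frame decomposition, which is exactly the content of the extension of differential operators to distributions recalled earlier in the excerpt; in particular the expression for $\nabla' f$ as an element of $\Gamma_{\mathsf{D}'}(U,\mathrm{T}^*X\otimes E')$ is legitimate and the two defining conditions can be compared componentwise. One may also note the statement is just the scalar-bundle specialization of Corollary 2.12(a) with $k=1$ (any covariant derivative is a first-order elliptic operator in the relevant sense for this comparison), but I would prefer to give the direct two-line frame argument since it avoids the machinery of Theorem 2.5 and makes transparent that no analytic regularity theory is needed in order one.
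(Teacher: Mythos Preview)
Your proof is correct and follows essentially the same route as the paper: write $\nabla' = d + A$ in a local frame and observe that $df^j$ differs from the $j$-th frame component of $\nabla' f$ by a smooth zeroth-order term in $f$. The paper's version picks Hermitian structures and an orthonormal frame (and only spells out the inclusion $\Gamma_{\mathsf{W}^{\nabla',p}_{\mathrm{loc}}} \subset \Gamma_{\mathsf{W}^{1,p}_{\mathrm{loc}}}$, treating the reverse as obvious), but the substance is identical.

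One correction to your closing aside: a covariant derivative $\nabla': E' \to \mathrm{T}^*X \otimes E'$ is \emph{not} elliptic in the paper's sense when $\dim X > 1$, since its symbol $\xi \mapsto (e \mapsto \xi \otimes e)$ is injective but not an isomorphism, and the source and target bundles do not even have the same rank; indeed the paper remarks explicitly that the iterated $\nabla^j_g$ are nonelliptic for $\dim M > 1$. So the lemma is not a specialization of the elliptic-regularity corollary, and your direct frame argument is genuinely the right proof here.
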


\begin{proof} Let $\ell:=\mathrm{rank}(E\rq{})$, and pick  Hermitian structures on 
$E\rq{}$ and $\mathrm{T}^* X$. Given $f\in \mathsf{W}^{p,\nabla}_{\mathrm{loc}}(X,E\rq{})$, 
we have to prove $f\in\mathsf{W}^{1,p}_{\mathrm{loc}}(X,E\rq{})$. To this end, it is sufficient to 
prove that if $V\Subset W\Subset  X$ are such that there is a chart  
$$
x=(x^1,\dots,x^m):W\longrightarrow \IR^m 
$$
for $X$ in which $E\rq{}\to X$ admits a orthonormal frame 
$e_1,\dots,e_\ell\in\Gamma_{\mathsf{C}^{\infty}}(W,E\rq{})$, 
then with the components $f^j:=(f,e_j)$ of $f$ one has
\begin{align}
\sum_{k,j}\int_V |\partial_k f^j(x)|^p \Id x<\infty.\label{zuz}
\end{align}
To this end, note that there is a unique matrix of $1$-forms
$$
A\in \mathrm{Mat}\big(\Gamma_{\mathsf{C}^{\infty}}(W,\mathrm{T}^* X);\ell\times \ell\big)
$$
such that with respect to the frame $(e_j)$ one has $\nabla=\Id +A$, in the sense that for all 
$(\Psi^1,\dots,\Psi^\ell)\in\mathsf{C}^{\infty}(W,\IC^\ell)$ one has
\begin{align*}
\nabla \sum_{j}\Psi^j e_j =\sum_{j}(\Id \Psi^j )\otimes e_j+ \sum_{j}\sum_{i}\Psi^jA_{ij} \otimes e_i.
\end{align*}   
It follows that in $W$ one has
$$
\sum_{j}\Id f^j\otimes e_j=\Id f = \nabla f -A f,
$$
so using $|A_{ij}| \leq C$ in $V$ and that $(e_j)$ is orthonormal we arrive at
\begin{align}
\sum_j\int_V |\Id f^j(x)|_x^p \Id x\leq \tilde{C}\int_V |\nabla f(x)|^p_x\Id x<\infty.\label{diffy}
\end{align}
But it is well-known that the integrability (\ref{diffy}) implies (\ref{zuz}) (see for example 
Excercise 4.11 b) in \cite{buch}).
\end{proof}

With these preparations, we can state the following covariant Meyers-Serrin theorem for Riemannian 
manifolds (which in the case of scalar functions, that is, if $E=M\times\IC$ with $\nabla=\Id$) has 
also been observed in \cite[Lemma 3.1]{mueller}):

\begin{Corollary}\label{meyers}  Let $p\in
[1,\infty)$, $s\in\IN$, and define a global Sobolev space by
$$
\Gamma_{\mathsf{W}^{s,p}_{\nabla,g}}(M,E):=
\Gamma_{\mathsf{W}^{\{\nabla^1_g,...,\nabla^s_g\},p}_{\mathrm{loc}}}(M,E).
$$ 
Then one has
$$
\Gamma_{\mathsf{W}^{s,p}_{\nabla,g}}(M,E)\subset\Gamma_{\mathsf{W}^{s,p}_{\mathrm{loc}}}(M,E),
$$
in particular, for any $f\in\Gamma_{\mathsf{W}^{s,p}_{\nabla,g}}(M,E)$ there is a sequence
$$
(f_n)\subset \Gamma_{\mathsf{C}^{\infty}}(M,E)\cap \Gamma_{\mathsf{W}^{s,p}_{\nabla,g}}(M,E),
$$
which can be chosen in $\Gamma_{\mathsf{C}^{\infty}_{\mathrm{c}}}(M,E)$ if $f$ is compactly supported, such 
$$
\left\|f_n-f\right\|_{\nabla,g,p}:=
\left\|f_n-f\right\|_{\{\nabla^{1}_g,\dots,\nabla^{s}_g\},p,\mathrm{vol}_g}\to 0 \text{ as $n\to\infty$.}
$$
\end{Corollary}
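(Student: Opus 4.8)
The plan is to reduce everything to the abstract Meyers--Serrin theorem (Theorem \ref{main}) applied to the family $\mathfrak{P}=\{\nabla^1_g,\dots,\nabla^s_g\}$, with $\mu=\mathrm{vol}_g$ and $k=s$ the highest differential order. By Theorem \ref{main}, the only thing to check is the local regularity hypothesis: if $s\ge 2$, then $\Gamma_{\mathsf{W}^{s,p}_{\nabla,g}}(M,E)\subset\Gamma_{\mathsf{W}^{s-1,p}_{\mathrm{loc}}}(M,E)$. In fact I would prove the stronger inclusion stated in the corollary, $\Gamma_{\mathsf{W}^{s,p}_{\nabla,g}}(M,E)\subset\Gamma_{\mathsf{W}^{s,p}_{\mathrm{loc}}}(M,E)$, which a fortiori gives the hypothesis of Theorem \ref{main} (and also shows, as remarked in the introduction, that the ``$k-1$'' in \eqref{deds} is not needed in this covariant setting).

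First I would set up the induction on $s$. The base case $s=1$ is precisely Lemma \ref{formel2}: $\Gamma_{\mathsf{W}^{\nabla,p}_{\mathrm{loc}}}(X,E')=\Gamma_{\mathsf{W}^{1,p}_{\mathrm{loc}}}(X,E')$ for any bundle $E'\to X$ with covariant derivative $\nabla'$. For the inductive step, suppose the claim holds for $s-1$ (for every bundle-with-connection), and let $f\in\Gamma_{\mathsf{W}^{s,p}_{\nabla,g}}(M,E)$. From $\nabla^1_g f,\dots,\nabla^{s-1}_g f\in\Gamma_{\mathsf{L}^p_{\mathrm{loc}}}$ we get that $f$ lies in the $\{\nabla^1_g,\dots,\nabla^{s-1}_g\}$-Sobolev space, so by the inductive hypothesis $f\in\Gamma_{\mathsf{W}^{s-1,p}_{\mathrm{loc}}}(M,E)$. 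It remains to upgrade to $\mathsf{W}^{s,p}_{\mathrm{loc}}$. The key observation is that $\nabla^{s-1}_g f\in\Gamma_{\mathsf{L}^p_{\mathrm{loc}}}(M,(\mathrm{T}^*M)^{\otimes(s-1)}\otimes E)$ and $\nabla^s_g f=\nabla^{(s)}_g(\nabla^{s-1}_g f)\in\Gamma_{\mathsf{L}^p_{\mathrm{loc}}}$, i.e. $g:=\nabla^{s-1}_g f$ lies in $\Gamma_{\mathsf{W}^{\nabla^{(s)}_g,p}_{\mathrm{loc}}}$ where $\nabla^{(s)}_g=\nabla^{(s-1)}_g\,\tilde\otimes\,\nabla_g$ is itself a covariant derivative on the bundle $E':=(\mathrm{T}^*M)^{\otimes(s-1)}\otimes E$. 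Applying Lemma \ref{formel2} to this bundle gives $g\in\Gamma_{\mathsf{W}^{1,p}_{\mathrm{loc}}}(M,E')$, that is, $\nabla^{s-1}_g f$ has all first-order partial derivatives locally $\mathsf{L}^p$.

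To convert this into $f\in\Gamma_{\mathsf{W}^{s,p}_{\mathrm{loc}}}(M,E)$, I would work in a chart $W$ as in the proof of Lemma \ref{formel2}, with an orthonormal frame for $E$ (and the induced frames on the tensor powers of $\mathrm{T}^*M$). Writing out $\nabla^j_g$ in this frame, one has schematically $\nabla^j_g f = \partial^{(j)}f^{\bullet} + (\text{lower-order derivatives of }f^{\bullet}\text{ with smooth coefficients})$, where $f^{\bullet}$ are the components of $f$; the top-order part is just the full collection of $j$-th partials of the components. Combining the facts already established — that the components of $f$ are in $\mathsf{W}^{s-1,p}_{\mathrm{loc}}$ (all partials up to order $s-1$ locally $\mathsf{L}^p$) and that the components of $\nabla^{s-1}_g f$ are in $\mathsf{W}^{1,p}_{\mathrm{loc}}$ — one solves for the top-order derivatives: each $s$-th partial $\partial^{\alpha}f^j$ with $|\alpha|=s$ is expressed as a component of $\partial_k(\nabla^{s-1}_g f)$ minus a smooth-coefficient combination of derivatives of $f^j$ of order $\le s-1$, all of which are locally $\mathsf{L}^p$. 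Hence $\partial^{\alpha}f^j\in\mathsf{L}^p_{\mathrm{loc}}(W)$ for $|\alpha|=s$, giving $f\in\Gamma_{\mathsf{W}^{s,p}_{\mathrm{loc}}}(M,E)$, completing the induction. With the inclusion in hand, Theorem \ref{main} applies directly and yields the approximating sequence $(f_n)$ with the stated properties, including the compactly supported refinement.

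The main obstacle I anticipate is the bookkeeping in the last step: one must be careful that the top-order term in $\nabla^{s-1}_g f$ genuinely recovers \emph{all} $(s-1)$-st partials of the components (so that differentiating it once recovers all $s$-th partials), and that the error terms really are of order $\le s-1$ with smooth coefficients — this is where the recursive definition $\nabla^{(j+1)}_g=\nabla^{(j)}_g\tilde\otimes\nabla_g$ and the Leibniz rule \eqref{product} are used to control the Christoffel-symbol contributions. This is routine but notationally heavy; everything else reduces cleanly to Lemma \ref{formel2} and Theorem \ref{main}.
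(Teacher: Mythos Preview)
Your proposal is correct and follows essentially the same approach as the paper: both prove the inclusion $\Gamma_{\mathsf{W}^{s,p}_{\nabla,g}}(M,E)\subset\Gamma_{\mathsf{W}^{s,p}_{\mathrm{loc}}}(M,E)$ by applying Lemma~\ref{formel2} inductively, and then invoke Theorem~\ref{main}. The paper's proof is a two-line sketch (\lq\lq{}Applying Lemma~\ref{formel2} inductively shows the inclusion, so that the other statements are implied by Theorem~\ref{main}\rq\rq{}), whereas you have correctly unpacked the inductive step---applying Lemma~\ref{formel2} to the bundle $(\mathrm{T}^*M)^{\otimes(s-1)}\otimes E$ with $\nabla^{(s)}_g$, and then extracting the top-order partials in a local frame.
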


\begin{proof} Applying Lemma \ref{formel2} inductively shows
$$
\Gamma_{\mathsf{W}^{s,p}_{\nabla,g}}(M,E)\subset\Gamma_{\mathsf{W}^{s,p}_{\mathrm{loc}}}(M,E),
$$
so that the other statements are implied by Theorem \ref{main}.
\end{proof}

\section{A substitute result for the $p=\infty$ case}

As ${\mathsf{C}^{\infty}}$ is not dense in ${\mathsf{L}^{\infty}}$,  it is clear that Theorem 
\ref{main} cannot be true for $p=\infty$. In this case, one can nevertheless smoothly approximate 
generalized $\mathsf{C}^k$-type spaces given by families $\mathfrak{P}$, without any further 
assumptions on $\mathfrak{P}$, an elementary fact which we record for the sake of completeness:

\begin{Proposition} Let $s\in\IN$, $k_1\dots,k_s\in\IN_{\geq 0}$, and let
$E\to X$, $F_i\to X$, for each $i\in\{1,\dots,s\}$, be 
smooth Hermitian vector bundles, and let $\mathfrak{P}:=\{P_1,\dots,P_s\}$ with 
$P_{i}\in\mathscr{D}_{\mathsf{C}^{\infty}}^{(k_i)}(X;E,F_i)$. Then with $k:=\max\{k_1,\dots,k_s\}$, 
define the Banach space $\Gamma_{\mathfrak{P},\infty}(X,E)$ by 
\begin{align*}
&\Gamma_{\mathfrak{P},\infty}(X,E)
\\
&:=\left.\Big\{f\right|f\in\Gamma_{\mathsf{C}\cap \mathsf{L}^{\infty}}(X,E), 
P_if\in\Gamma_{\mathsf{C}\cap \mathsf{L}^{\infty} }(X,F_i)\text{ \emph{for all} $i\in\{1,\dots,s\}$}\Big\}
\\
&\text{\emph{with norm} $\left\| f\right\|_{\mathfrak{P},\infty}:=
\left\|f\right\|_{\infty}+\sum^s_{i=1}\left\|P_{i} f\right\|_{\infty}$}.
\end{align*}
Assume that $\Gamma_{\mathfrak{P},\infty}(X,E) \subset \Gamma_{\mathsf{C}^{k-1}}(X,E)$. 
Then $\Gamma_{\mathsf{C}^{\infty}}(X,E)\cap\Gamma_{\mathfrak{P},\infty} (X,E)$ is dense  
in $\Gamma_{\mathfrak{P},\infty}(X,E)$.
\end{Proposition}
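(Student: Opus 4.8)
The plan is to transcribe the proof of Theorem~\ref{main} almost verbatim, replacing $\mathsf{L}^p_\mu$-convergence by local uniform convergence and replacing its mollifier input Proposition~\ref{molli}~a) by a sup-norm analogue; the only genuinely new ingredient is a sup-norm version of Friedrichs' commutator lemma. \emph{Local mollification lemma:} if $U\subset\IR^m$ is open, $P\in\IDD^{(k)}_{\mathsf{C}^{\infty}}(U;\IC^{\ell_0},\IC^{\ell_1})$, and $f\in\mathsf{C}^{k-1}(U,\IC^{\ell_0})$ has compact support with $Pf\in\mathsf{C}(U,\IC^{\ell_1})$ (for $k\le 1$ one only assumes $f$ continuous), then $f_\epsilon\to f$ and $Pf_\epsilon\to Pf$ uniformly on $\IR^m$ as $\epsilon\to0+$, where $f_\epsilon$ is the mollification from Proposition~\ref{molli}. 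The first convergence is classical, and since $Pf$ is continuous and compactly supported also $(Pf)_\epsilon\to Pf$ uniformly, so it suffices to bound the commutator $Pf_\epsilon-(Pf)_\epsilon$. Writing $P=\sum_{\alpha\in\IN^m_k}P_\alpha\partial^\alpha$ and using $\partial^\alpha f_\epsilon=(\partial^\alpha f)_\epsilon$ (valid in the distributional sense), one has
\[
Pf_\epsilon-(Pf)_\epsilon=\sum_{\alpha\in\IN^m_k}\Big(P_\alpha\cdot(\partial^\alpha f)_\epsilon-(P_\alpha\,\partial^\alpha f)_\epsilon\Big).
\]
For $|\alpha|\le k-1$ the distribution $\partial^\alpha f$ is a continuous compactly supported function, so both $P_\alpha\cdot(\partial^\alpha f)_\epsilon$ and $(P_\alpha\partial^\alpha f)_\epsilon$ converge uniformly to $P_\alpha\partial^\alpha f$, and that summand disappears in the limit.

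For $|\alpha|=k$ I would fix $j$ with $\alpha_j\ge1$ and set $v:=\partial^{\alpha-e_j}f$, a continuous compactly supported function since $|\alpha-e_j|=k-1$; the summand then becomes $P_\alpha\cdot\partial_j(v_\epsilon)-(P_\alpha\partial_j v)_\epsilon$, whose uniform decay is the sup-norm Friedrichs commutator lemma. I would prove it from the classical identity
\[
P_\alpha(x)\,\partial_j(v_\epsilon)(x)-(P_\alpha\partial_j v)_\epsilon(x)=\int_{\IR^m}\big(P_\alpha(x)-P_\alpha(y)\big)v(y)\,(\partial_jh_\epsilon)(x-y)\,\Id y+\int_{\IR^m}(\partial_jP_\alpha)(y)\,v(y)\,h_\epsilon(x-y)\,\Id y,
\]
obtained by integrating by parts in the second convolution. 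Using a first-order Taylor expansion $P_\alpha(x)-P_\alpha(y)=\sum_l(\partial_lP_\alpha)(x)(x_l-y_l)+O(|x-y|^2)$ (the coefficients being smooth) together with the elementary moment identities $\int z_l(\partial_jh_\epsilon)(z)\,\Id z=-\delta_{jl}$ and $\int|z|^2|(\partial_jh_\epsilon)(z)|\,\Id z=O(\epsilon)$, the first integral equals $-(\partial_jP_\alpha)(x)v(x)+O(\omega_v(\epsilon)+\epsilon)$ uniformly in $x$, while the second equals $((\partial_jP_\alpha)v)_\epsilon(x)=(\partial_jP_\alpha)(x)v(x)+o(1)$ uniformly; here $\omega_v$ is a modulus of continuity of $v$. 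Hence the whole expression tends to $0$ uniformly and the lemma follows.

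The globalisation is now the proof of Theorem~\ref{main} with these substitutions. Choose a relatively compact, locally finite atlas $\bigcup_n U_n=X$ with smooth orthonormal frames for $E,F_1,\dots,F_s$ over each $U_n$ and a subordinate partition of unity $(\varphi_n)$, and for $f\in\Gamma_{\mathfrak{P},\infty}(X,E)$ set $f_n:=\varphi_n f$. The hypothesis $\Gamma_{\mathfrak{P},\infty}(X,E)\subset\Gamma_{\mathsf{C}^{k-1}}(X,E)$ gives $f\in\Gamma_{\mathsf{C}^{k-1}}(X,E)$, hence $f_n\in\Gamma_{\mathsf{C}^{k-1}}(X,E)$ with compact support in $U_n$, while $P_if_n=\varphi_nP_if+[P_i,\varphi_n]f$ is continuous with compact support, since $P_if$ is continuous and $[P_i,\varphi_n]\in\IDD^{(k_i-1)}_{\mathsf{C}^{\infty}}(U_n;E,F_i)$ has compactly supported coefficients with $k_i-1\le k-1$. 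Reading $f_n$ through the chart and the orthonormal frames and applying the local lemma to each $P_i$, for every $\epsilon>0$ I obtain $f_{n,\epsilon}\in\Gamma_{\mathsf{C}^{\infty}_{\mathrm{c}}}(X,E)$ with support in $U_n$ and $\|f_{n,\epsilon}-f_n\|_{\mathfrak{P},\infty}<\epsilon/2^{n+1}$. Then $f_\epsilon:=\sum_n f_{n,\epsilon}$ is a locally finite sum, hence a well-defined element of $\Gamma_{\mathsf{C}^{\infty}}(X,E)$, and since $\sum_n\varphi_n=1$,
\[
\|f_\epsilon-f\|_{\mathfrak{P},\infty}=\Big\|\sum_n(f_{n,\epsilon}-f_n)\Big\|_{\mathfrak{P},\infty}\le\sum_n\|f_{n,\epsilon}-f_n\|_{\mathfrak{P},\infty}<\epsilon;
\]
in particular $\|f_\epsilon\|_{\mathfrak{P},\infty}\le\|f\|_{\mathfrak{P},\infty}+\epsilon<\infty$, so $f_\epsilon\in\Gamma_{\mathsf{C}^{\infty}}(X,E)\cap\Gamma_{\mathfrak{P},\infty}(X,E)$, which proves the density. (Note that no summability obstruction arises here, unlike in a naive attempt at $p=\infty$, precisely because each piece is individually made small with a geometric weight.)

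I expect the main obstacle to be the sup-norm Friedrichs commutator estimate in the local lemma: one needs uniform convergence with a rate controlled only by the modulus of continuity of the continuous — but in general nondifferentiable — function $v=\partial^{\alpha-e_j}f$. Everything else is a routine transcription of the proof of Theorem~\ref{main}, with $\|\bullet\|_\infty$ in place of $\|\bullet\|_{p,\mu}$ and with the hypothesis $\Gamma_{\mathfrak{P},\infty}(X,E)\subset\Gamma_{\mathsf{C}^{k-1}}(X,E)$ playing the role of the regularity assumption in Theorem~\ref{main}.
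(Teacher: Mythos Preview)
Your proposal is correct and follows exactly the localization-plus-mollification scheme the paper indicates (``the same localization argument as in the proof of Theorem~\ref{main}''). The one substantive addition is that you supply the appropriate mollification input: the paper cites Proposition~\ref{molli}~b), which literally requires $f\in\mathsf{C}^k$, whereas the hypothesis only yields $f\in\mathsf{C}^{k-1}$; your sup-norm Friedrichs commutator lemma is precisely the uniform-convergence analogue of Proposition~\ref{molli}~a) that the argument actually needs, and your proof of it via the integration-by-parts identity and the moment computation for $\partial_j h_\epsilon$ is correct.
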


Using Proposition \ref{molli} b), this result follows from the same localization argument as in the proof of 
Theorem \ref{main}.

\appendix 
\section{An existence and uniqueness result for systems of linear elliptic PDE\rq{}s 
on the Besov scale}\label{beweis2}

Throughout this section, let $\ell\in\IN$ be arbitrary. We again use the notation $(\bullet,\bullet)$, 
$\left|\bullet\right|$, and $\mathrm{B}_r(x)$ for the standard Euclidean data in each $\IC^n$. 
We start by recalling the definition of Besov spaces with a positive differential order:

\begin{Definition} 
For any $\alpha\in (0,1], p\in [1,\infty], q\in [1,\infty)$, one defines 
$\mathsf{B}^{\alpha}_{p,q}(\IR^m,\IC^\ell)$ 
to be the space of $u\in\mathsf{L}^p(\IR^m,\IC^\ell)$ such that
\begin{align*}
\int_{\IR^m} \left\|u(\bullet+x)-2u+u(\bullet-x)\right\|_{\mathsf{L}^p(\IR^m,\IC^\ell)}
|x|^{-m-\alpha q}\Id x <\infty,
\end{align*}  
and $\mathsf{B}^{\alpha}_{p,\infty}(\IR^m,\IC^\ell)$ to be the space 
of $u\in\mathsf{L}^p(\IR^m,\IC^\ell)$ such that
\begin{align*}
\sup_{x\in\IR^m\setminus\{0\}}|x|^{-\alpha } 
\left\|u(\bullet+x)-2u+u(\bullet-x)\right\|_{\mathsf{L}^p(\IR^m,\IC^\ell)}
<\infty.
\end{align*}  
For $\alpha\in (1,\infty)$, $p\in [1,\infty]$, $q\in [1,\infty]$, one defines 
$\mathsf{B}^{\alpha}_{p,q}(\IR^m,\IC^\ell)$ to be the space\footnote{Here, 
$[\alpha]:=\max\{j|j\in\IN, j<\alpha\}$ } of $u\in\mathsf{W}^{[\alpha],p}(\IR^m,\IC^\ell)$ 
such that for all $\beta\in (\IN_{\geq 0})^m$ with $|\beta|=[\alpha]$ 
one has $\partial^{\beta}u \in\mathsf{B}^{\alpha-[\alpha],p}(\IR^m,\IC^\ell)$. These are 
Banach spaces with respect to their canonical norms.
\end{Definition}

For negative differential orders, the definition is more subtle:

\begin{Propandef} Let $t(\zeta):=|\zeta|$, $\zeta\in\IR^m$, and for any $\gamma\in\IR$ let 
$$
J_{\gamma}:=\mathcal{F}^{-1}(1+t^2)^{-\gamma/2}
$$ 
denote the Bessel potential of order $\gamma$. Let $\alpha\in (-\infty,0]$, $p\in [1,\infty]$, 
$q\in [1,\infty)$, and pick some $\beta\in (0,\infty)$. Then one defines 
$\mathsf{B}^{\alpha}_{p,q}(\IR^m,\IC^\ell)$ to be the space of $u\in\mathsf{S}\rq{}(\IR^m,\IC^\ell)$ 
such that $u= J_{\alpha-\beta}*f$ for some $f\in \mathsf{B}^{\beta}_{p,q}(\IR^m,\IC^\ell)$. 
This definition does not depend on the particular choice of $\beta$, and one defines 
$$
\left\|u\right\|_{\mathsf{B}^{\alpha}_{p,q}(\IR^m,\IC^\ell)}:=
\left\|J_{\alpha-1}*u\right\|_{\mathsf{B}^{1}_{p,q}(\IR^m,\IC^\ell)},
$$
which again produces a Banach space.
\end{Propandef}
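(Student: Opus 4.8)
The plan is to reduce the statement entirely to two standard facts about Bessel potentials and then to bookkeep with them. The first is the \emph{semigroup law} $J_{\gamma_1}*J_{\gamma_2}=J_{\gamma_1+\gamma_2}$ for all $\gamma_1,\gamma_2\in\IR$, with $J_0$ the Dirac mass. The key interpretive point is that, since $(1+t^2)^{-\gamma/2}$ is a smooth function all of whose derivatives have at most polynomial growth, convolution by $J_\gamma$ is to be read as the continuous linear operator $u\mapsto\mathcal{F}^{-1}\big[(1+t^2)^{-\gamma/2}\,\mathcal{F}u\big]$ on $\mathsf{S}'(\IR^m,\IC^\ell)$, \emph{also} for $\gamma<0$, where $J_\gamma$ is itself only a tempered distribution; with this reading the semigroup law is mere multiplicativity of symbols, and each $J_\gamma$ is a bijection of $\mathsf{S}'(\IR^m,\IC^\ell)$ with inverse $J_{-\gamma}$, the symbol being nowhere zero. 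The second fact is the \emph{classical lifting theorem}: for all $\beta,\beta'\in(0,\infty)$, the operator $J_{\beta'-\beta}$ restricts to a Banach-space isomorphism $\mathsf{B}^{\beta}_{p,q}(\IR^m,\IC^\ell)\to\mathsf{B}^{\beta'}_{p,q}(\IR^m,\IC^\ell)$ --- the one genuinely analytic ingredient, which I would simply quote from the standard literature on Besov spaces (the difference-norm description of positive order used above is the classical one, and everything is componentwise in the vector-valued case).

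Granting these, I would argue independence of the auxiliary parameter $\beta$ as follows. Fix $\beta_1,\beta_2\in(0,\infty)$ and suppose $u=J_{\alpha-\beta_1}*f$ with $f\in\mathsf{B}^{\beta_1}_{p,q}(\IR^m,\IC^\ell)$. Put $g:=J_{\beta_2-\beta_1}*f$; the lifting theorem gives $g\in\mathsf{B}^{\beta_2}_{p,q}(\IR^m,\IC^\ell)$, and the semigroup law gives $J_{\alpha-\beta_2}*g=J_{\alpha-\beta_2}*J_{\beta_2-\beta_1}*f=J_{\alpha-\beta_1}*f=u$, so $u$ also admits a representation with parameter $\beta_2$. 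Exchanging $\beta_1$ and $\beta_2$ gives the reverse inclusion, so the set $\mathsf{B}^\alpha_{p,q}(\IR^m,\IC^\ell)$ is independent of $\beta$.

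For the norm and completeness, let $\Lambda:=J_{1-\alpha}$ be the Bessel lift of (positive) order $1-\alpha$ --- the operator intended in the norm formula, carrying differential order $\alpha$ to differential order $1$. First I would show $\Lambda$ restricts to a linear bijection $\mathsf{B}^\alpha_{p,q}(\IR^m,\IC^\ell)\to\mathsf{B}^1_{p,q}(\IR^m,\IC^\ell)$: writing $u=J_{\alpha-\beta}*f$ with $f\in\mathsf{B}^{\beta}_{p,q}(\IR^m,\IC^\ell)$ for some $\beta>0$, the semigroup law gives $\Lambda u=J_{1-\beta}*f$, which lies in $\mathsf{B}^1_{p,q}(\IR^m,\IC^\ell)$ by the lifting theorem; conversely, for $h\in\mathsf{B}^1_{p,q}(\IR^m,\IC^\ell)$ the element $u:=J_{\alpha-1}*h$ is of the form $J_{\alpha-\beta}*f$ with $\beta=1$, $f=h$, hence $u\in\mathsf{B}^\alpha_{p,q}(\IR^m,\IC^\ell)$ with $\Lambda u=h$; and $\Lambda$ is injective on all of $\mathsf{S}'(\IR^m,\IC^\ell)$ since its symbol never vanishes. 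Then $\|u\|_{\mathsf{B}^\alpha_{p,q}}:=\|\Lambda u\|_{\mathsf{B}^1_{p,q}}$ is the pullback along a linear bijection of the complete norm of $\mathsf{B}^1_{p,q}(\IR^m,\IC^\ell)$, hence a norm turning $\mathsf{B}^\alpha_{p,q}(\IR^m,\IC^\ell)$ into a Banach space (with $\Lambda$ an isometric isomorphism onto $\mathsf{B}^1_{p,q}$).

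I expect the only real obstacle to be presentational: deciding how much of the Besov-space machinery to treat as known, the lifting theorem for positive orders being the single nontrivial input --- after which everything is a short formal manipulation with the semigroup law. The one subtlety I would be careful to flag is that all the convolutions $J_\gamma*(\cdot)$ used above, for $\gamma$ of either sign, must be the Fourier-multiplier operators on $\mathsf{S}'(\IR^m,\IC^\ell)$ described in the first paragraph; this is exactly what legitimizes the regroupings $J_{\gamma_1}*J_{\gamma_2}*f=J_{\gamma_1+\gamma_2}*f$ and sidesteps any difficulty with convolving two bona fide tempered distributions.
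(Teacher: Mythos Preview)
The paper does not actually supply a proof of this Proposition-and-Definition: it is stated as a standard fact from Besov space theory (the paper refers generally to \cite{davide} and the literature for the properties of $\mathsf{B}^\beta_{p,q}$ it uses) and then moves directly on to Proposition~\ref{pr2}. So there is nothing to compare your argument against.

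That said, your sketch is exactly the standard justification and is correct. The two ingredients you isolate---the semigroup/multiplicativity property of the Bessel symbols $(1+t^2)^{-\gamma/2}$ on $\mathsf{S}'$, and the lifting isomorphism $J_{\beta'-\beta}:\mathsf{B}^{\beta}_{p,q}\to\mathsf{B}^{\beta'}_{p,q}$ for positive orders---are precisely what one needs, and your reduction of ``independence of $\beta$'' and ``Banach space'' to these is clean. Your care in reading $J_\gamma*(\cdot)$ as the Fourier multiplier on $\mathsf{S}'$ (rather than a literal convolution of tempered distributions) is the right way to make the associativity manipulations rigorous.

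One point worth making explicit in your write-up rather than leaving implicit: you silently replace the paper's $J_{\alpha-1}$ in the norm formula by $\Lambda=J_{1-\alpha}$. With the paper's convention $J_\gamma=\mathcal{F}^{-1}(1+t^2)^{-\gamma/2}$, convolution by $J_\gamma$ raises regularity by $\gamma$, so mapping $\mathsf{B}^\alpha_{p,q}$ to $\mathsf{B}^1_{p,q}$ indeed requires $J_{1-\alpha}$, not $J_{\alpha-1}$; the printed index appears to be a typo. You are right to use $J_{1-\alpha}$, but you should flag the correction openly rather than calling it ``the operator intended in the norm formula'' without comment.
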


We are going to prove:

\begin{Proposition}\label{pr2}
Let $n\in\IN_{\geq 0}$, $Q \in \mathscr{D}_{\mathsf{C}^\infty}^{(n)}(\IR^m; \IC^\ell, \IC^\ell)$,
$$
\begin{array}{ccccc}
Q = \sum_{\alpha \in \IN_n^m}Q_\alpha \partial^\alpha, & {\it with }& Q_\alpha : \IR^m 
\longrightarrow 
\mathrm{Mat}(\IC; \ell \times \ell) & {\it in } &  \mathsf{W}^{\infty,\infty},
\end{array}
$$
that is, $Q_\alpha$ and all its derivatives are bounded. Suppose also that for some 
$\theta_0 \in (-\pi, \pi]$ and all
$$
 (x,\xi, r) \in \IR^m \times (\IR^m \times [0, \infty)) \setminus \{(0, 0)\}),
$$ 
the complex $\ell\times \ell$ matrix $r^n \mathrm{e}^{\mathrm{i}\theta_0}-\sigma_{Q,x}(\mathrm{i}\xi)$ 
is invertible, and that there are is $C>0$ such that for all $(x,\xi, r)$ as above one has
\begin{align}\label{ellp}
\left|\big(r^n \mathrm{e}^{\mathrm{i}\theta_0}   -  \sigma_{Q,x}(\mathrm{i}\xi)\big)^{-1}\right| 
\leq C(r + |\xi|)^{-n}. 
\end{align}
We consider the system of linear PDE\rq{}s given by 
\begin{equation}\label{eq1}
r^n \mathrm{e}^{\mathrm{i}\theta_0} u(x) - Q u(x) = g(x), \quad x \in \IR^m, r\geq 0. 
\end{equation}
Then for any $\beta \in \IR$, $ p, q\in [1,\infty]$, there is a $R =R(\beta,p,q,Q)\geq 0$ with the 
following property: if $r \geq R$ and $g \in \mathsf{B}^\beta_{p,q}(\IR^m, \IC^\ell)$, then 
(\ref{eq1}) has a unique solution $u\in\mathsf{B}^{\beta+n}_{p,q}(\IR^m, \IC^\ell)$.
\end{Proposition}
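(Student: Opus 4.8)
The plan is to treat \eqref{eq1} as a perturbation of a constant-coefficient problem and to combine an elementary Fourier-multiplier estimate with a parametrix / Neumann-series argument on the Besov scale. The first step is to establish the constant-coefficient case: fix $x_0\in\IR^m$ and consider the operator $Q_{x_0}$ with symbol $\sigma_{Q,x_0}(\mathrm{i}\xi)$ together with lower-order terms frozen at $x_0$; then $r^n\mathrm{e}^{\mathrm{i}\theta_0}-Q_{x_0}$ is a Fourier multiplier whose symbol, by \eqref{ellp}, satisfies $|(r^n\mathrm{e}^{\mathrm{i}\theta_0}-\sigma_{Q,x_0}(\mathrm{i}\xi))^{-1}|\le C(r+|\xi|)^{-n}$, and more generally one wants H\"ormander–Mikhlin-type bounds on $\xi\mapsto \langle\xi\rangle^n(r^n\mathrm{e}^{\mathrm{i}\theta_0}-\sigma_{Q,x_0}(\mathrm{i}\xi))^{-1}$, uniformly in $r\ge 0$ and $x_0$. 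Since Besov spaces are defined via (iterated) second differences and Bessel potentials, the cleanest route is to use that such multipliers act boundedly on every $\mathsf{B}^{\beta}_{p,q}$ with norm controlled by the symbol estimates; this gives, for the frozen problem, the a priori bound $\|u\|_{\mathsf{B}^{\beta+n}_{p,q}}\le C\|g\|_{\mathsf{B}^{\beta}_{p,q}}$ uniformly for $r\ge 0$, and solvability by directly writing $u=\mathcal F^{-1}[(r^n\mathrm{e}^{\mathrm{i}\theta_0}-\sigma_{Q,x_0}(\mathrm{i}\xi)-(\text{lower order}))^{-1}\hat g]$ once $r$ is large enough that the lower-order terms are absorbed.

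Next I would pass to variable coefficients by a standard localization/patching argument: cover $\IR^m$ by a locally finite family of balls on which the coefficients $Q_\alpha$ oscillate by an arbitrarily small amount (possible because $Q_\alpha\in\mathsf{W}^{\infty,\infty}$, hence uniformly continuous), build a subordinate partition of unity $\{\varphi_j\}$ with uniformly bounded derivatives, and for each $j$ compare $Q$ with the operator $Q^{(j)}$ whose coefficients are frozen at the center $x_j$. Writing $u=\sum_j\varphi_j u$ and commuting $\varphi_j$ past $Q$ produces, on each patch, an equation of the form $(r^n\mathrm{e}^{\mathrm{i}\theta_0}-Q^{(j)})(\varphi_j u)=\varphi_j g+(\text{small-coefficient operator})(\varphi_j u)+(\text{lower-order commutator})u$. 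Inverting $r^n\mathrm{e}^{\mathrm{i}\theta_0}-Q^{(j)}$ via the frozen-coefficient result and summing over $j$ yields an identity $u=\mathcal Sg+\mathcal Ku$ where $\mathcal S$ is a bounded right-parametrix $\mathsf{B}^{\beta}_{p,q}\to\mathsf{B}^{\beta+n}_{p,q}$ and $\mathcal K$ has small operator norm on $\mathsf{B}^{\beta+n}_{p,q}$ once (i) the oscillation of the coefficients on each patch is taken small and (ii) $r\ge R$ is large, so that both the frozen lower-order terms and the $(r+|\xi|)^{-1}$-gain in the commutator terms are controlled. Then $(\mathrm{Id}-\mathcal K)^{-1}$ exists by Neumann series, giving existence of $u\in\mathsf{B}^{\beta+n}_{p,q}$, and the same estimate run in reverse (testing against the adjoint parametrix, or simply using the a priori bound $\|u\|_{\mathsf{B}^{\beta+n}_{p,q}}\le C\|(r^n\mathrm{e}^{\mathrm{i}\theta_0}-Q)u\|_{\mathsf{B}^{\beta}_{p,q}}$ for $r\ge R$) gives uniqueness.

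I expect the main obstacle to be the uniformity of the multiplier bounds: one needs the H\"ormander–Mikhlin (or rather the "$\xi$-derivatives decay like $(r+|\xi|)^{-n-|\gamma|}$") estimates on the resolvent symbol $(r^n\mathrm{e}^{\mathrm{i}\theta_0}-\sigma_{Q,x_0}(\mathrm{i}\xi))^{-1}$ to hold \emph{uniformly in $r\ge 0$ and in the freezing point $x_0$}, starting only from the zeroth-order bound \eqref{ellp} and boundedness of all coefficients. This is obtained by differentiating the resolvent identity — each $\partial_{\xi_k}$ hitting $(r^n\mathrm{e}^{\mathrm{i}\theta_0}-\sigma)^{-1}$ produces a factor $(r^n\mathrm{e}^{\mathrm{i}\theta_0}-\sigma)^{-1}(\partial_{\xi_k}\sigma)(r^n\mathrm{e}^{\mathrm{i}\theta_0}-\sigma)^{-1}$, and $\partial_{\xi_k}\sigma$ grows like $|\xi|^{n-1}\le (r+|\xi|)^{n-1}$ while each resolvent factor contributes $(r+|\xi|)^{-n}$ — and then controlling the boundedness of such multipliers on $\mathsf{B}^\beta_{p,q}$ for \emph{all} real $\beta$ and \emph{all} $p,q\in[1,\infty]$ (which is why the Besov, rather than Bessel-potential, scale is convenient: there the Mikhlin-type theorem holds at the endpoints $p=1,\infty$ as well). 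A secondary technical point is bookkeeping the dependence $R=R(\beta,p,q,Q)$ through the patching: the number of overlapping patches and the sizes of $\varphi_j$-derivatives must be controlled independently of the patch, which is where local finiteness of the cover with bounded multiplicity and the uniform continuity of the coefficients enter decisively.
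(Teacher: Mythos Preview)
Your plan is correct in outline and shares its opening moves with the paper: both begin with the constant-coefficient case via Fourier multipliers (the paper's Step~1, including absorption of lower-order terms for large $r$ by writing $h=r^n\mathrm{e}^{\mathrm{i}\theta_0}u-Q_nu$ and inverting $I-(\text{lower order})(r^n\mathrm{e}^{\mathrm{i}\theta_0}-Q_n)^{-1}$), and both pass to variable coefficients by freezing on a uniformly-small-oscillation cover and commuting a partition of unity past $Q$ (the paper's Steps~2--3). The genuine divergence is in how \emph{existence} is obtained. You propose to assemble a global parametrix $\mathcal{S}$ and invert $I-\mathcal{K}$ by Neumann series, getting existence and uniqueness simultaneously on every $\mathsf{B}^\beta_{p,q}$. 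The paper instead extracts only an \emph{a priori estimate} from the localization, and then establishes existence separately: by Hahn--Banach/duality for $p<\infty$ (Step~4, using that $Q^{\dagger}$ satisfies the same hypotheses with $-\theta_0$), by weak-$*$ compactness and approximation for $p=q=\infty$ (Step~5), and finally by real interpolation to reach arbitrary $q$ (Step~6). Your route is conceptually more direct and sidesteps the separate endpoint argument; the paper's route trades the parametrix construction for a short duality step.

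One caution on your summing-over-$j$ step: the paper is deliberate here, and its localization identity $\|f\|_{\mathsf{B}^\beta_{p,p}}\asymp \big\|(\|\psi_j f\|_{\mathsf{B}^\beta_{p,p}})_j\big\|_{\ell^p}$ (property~(viii)) is stated only for the diagonal $q=p$; this is precisely why Steps~3--5 are carried out in $\mathsf{B}^\beta_{p,p}$ and general $q$ is recovered only at the end by interpolation. If you intend to run the Neumann-series argument directly on $\mathsf{B}^\beta_{p,q}$ with $q\neq p$, you will need either an off-diagonal analogue of that localization identity or another device (almost-orthogonality, or recognizing the patched parametrix as a pseudodifferential operator in a class bounded on all $\mathsf{B}^\beta_{p,q}$). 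Absent that, your argument will also want the $q=p$ restriction followed by interpolation, at which point the two proofs differ only in ``Neumann series'' versus ``a priori estimate plus duality'' for the existence half. A minor related point: from $u=\mathcal{S}g+\mathcal{K}u$ (derived assuming $u$ solves \eqref{eq1}) you get the a priori bound; for existence you still need the companion identity $(r^n\mathrm{e}^{\mathrm{i}\theta_0}-Q)\mathcal{S}=I+\mathcal{K}'$ with $\|\mathcal{K}'\|<1$, i.e., a right (not just left) parametrix.
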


Note that given some $Q \in \mathscr{D}_{\mathsf{C}^\infty}^{(n)}(\IR^m; \IC^\ell, \IC^\ell)$ which is 
strongly elliptic in the usual sense 
$$
\Re(\sigma_{Q,x}(\zeta)\eta,\eta)\geq \tilde{C} |\eta|^2\> \text{ for all $x\in \IR^m$, 
$\eta\in\IC^\ell$, $\zeta\in \IC^m$ with $|\zeta|=1$}
$$
with some $\tilde{C}>0$ which is uniform in $x$, $\eta$, $\zeta$, it is straightforward to see that 
the condition (\ref{ellp}) is satisfied with $\theta_0=\pi$, $C=\min\{1,\tilde{C}\}$ (see also the 
proof of Theorem \ref{regu} b)).\\
Before we come to the proof of Proposition \ref{pr2}, we first collect some well known facts concerning 
Besov spaces. Unless otherwise stated, the reader may find these results in \cite{davide} and the 
references therein. 

(i) For every $p \in [1, \infty]$ one has 
$\mathsf{B}_{p,1}^0(\IR^m) \hookrightarrow \mathsf{L}^p(\IR^m) \hookrightarrow \mathsf{B}_{p,\infty}^0(\IR^m)$. 

(ii) Let $ p, q \in  [1, \infty]$, $ \beta \in \IR$. Then 
$$
\mathsf{B}^{\beta +1}_{p,q} (\IR^m) = \{f| f \in \mathsf{B}^{\beta}_{p,q} (\IR^m),  
\partial_j f \in \mathsf{B}^{\beta}_{p,q} (\IR^m) \text{ for all } j \in \{1, \dots, m\}\}. 
$$
So for all $ k \in \IN$ one has 
$\mathsf{B}_{p,1}^k(\IR^m) \hookrightarrow \mathsf{W}^{k,p}(\IR^m) 
\hookrightarrow \mathsf{B}_{p,\infty}^k(\IR^m)$. 

(iii) As a consequence of (ii), we have the following particular case of Sobolev embedding 
theorem: if $\beta \in \IR$, $1 \leq p, q \leq \infty$, 
$\mathsf{B}^\beta_{p,q}(\IR^m) \hookrightarrow \mathsf{B}^{\beta - m/p}_{\infty,\infty}(\IR^m)$. 

(iv) Let us indicate with $(\cdot,\cdot)_{\theta,q}$ ($0 < \theta < 1$, $1 \leq q \leq \infty$) 
the real interpolation functor. Then, if 
$-\infty < \alpha_0 < \alpha_1 < \infty$, $1 \leq p, q_0, q_1 \leq \infty$, the real interpolation 
space $(\mathsf{B}^{\alpha_0}_{p,q_0}(\IR^m), \mathsf{B}^{\alpha_1}_{p,q_1}(\IR^m))_{\theta,q}$ coincides 
with $\mathsf{B}^{(1-\theta)\alpha_0 + \theta \alpha_1}_{p,q}(\IR^m)$, with equivalent norms. 

(v) If $1 \leq p, q < \infty$ and $\beta \in \IR$, the  antidual space of 
$\mathsf{B}^\beta_{p,q}(\IR^m)$ can be identified with $\mathsf{B}^{-\beta}_{p',q'}(\IR^m)$ 
in the following sense: if $g \in \mathsf{B}^{-\beta}_{p',q'}(\IR^m)$, then the (antilinear) 
distribution $\left\langle\bullet, \overline g\right\rangle$ can be uniquely extended to a 
bounded antilinear functional in $\mathsf{B}^\beta_{p,q}(\IR^m)$ (we recall  
here also that, whenever $\max\{p,q\}< \infty$, then $\mathsf{C}_{\mathrm{c}}^\infty(\IR^m)$ 
is dense in each $\mathsf{B}^\beta_{p,q}(\IR^m)$). Moreover, all bounded antilinear functionals on 
$\mathsf{B}^\beta_{p,q}(\IR^m)$ can be obtained in this way.

(vi) Suppose that $a \in \mathsf{C}^\infty(\IR^m)$, and that for some $n \in \IR$ and all $\xi\in\IR^m$ 
one has 
$$
\max_{\alpha\in\IN^{m}_{m+1} } |\partial^\alpha a(\xi)|\leq C (1 + |\xi|)^{n - |\alpha|}. 
$$
Then for all 
$$ 
(\beta,p,q) \in \IR \times [1, \infty] \times [1, \infty],
$$ 
the Fourier multiplication operator $f \mapsto {\mathcal F}^{-1}(a {\mathcal F} f)$ maps 
$\mathsf{B}^\beta_{p,q}(\IR^m)$ into $\mathsf{B}^{\beta-n}_{p,q}(\IR^m)$, and the 
norm of the latter operator can be estimated by 
$$
C \sup_{\alpha \in \IN^m_{m+1},\xi\in\IR^m} \left|(1 + |\xi|)^{|\alpha|-n} \partial^\alpha a(\xi)\right|,
$$ 
for some $C >0$ independent of $a$ (cf. \cite{amann}). 
 
(vii) If $a \in \mathsf{W}^{\infty,\infty}(\IR^m)$ and $f \in \mathsf{B}^\beta_{p,q}(\IR^m)$, then one has 
$af \in \mathsf{B}^\beta_{p,q}(\IR^m)$. More precisely, there exist $C>0$, $N \in \IN$, independent of $a$ 
and $f$, such that 
$$
\|af\|_{\mathsf{B}^\beta_{p,q}(\IR^m)} \leq C\left(\|a\|_{\mathsf{L}^\infty(\IR^m)}
\|f\|_{\mathsf{B}^\beta_{p,q}(\IR^m)} + \|a\|_{\mathsf{W}^{N,\infty}(\IR^m)}
\|f\|_{\mathsf{B}^{\beta-1}_{p,q}(\IR^m)}\right). 
$$

(viii) Let $0\leq \chi_0 \in C_{\mathrm{c}}^\infty(\IR^m)$ be such that for some $\delta >0$ one 
has
$$
\mathrm{supp}(\chi_0)\subset[-\delta, \delta]^m, \>\chi_0 = 1 \text{ in } [-\delta/2, \delta/2]^m. 
$$
For any $j \in \IZ^m$ set
$$
\chi_j(x):= \chi_0(x -  \delta j/2  ), \chi(x):= 
\sum_{j \in \IZ^m} \chi_j(x), \psi_j(x) := \frac{\chi_j(x)}{\chi(x)}. 
$$
Then for all $ \beta \in \IR$, $ p \in [1, \infty]$, there exist $C_1,C_2>0$ such that for all 
$f \in \mathsf{B}^\beta_{p,p}(\IR^m)$ it holds that
$$
C_1 \|f\|_{\mathsf{B}^\beta_{p,p}(\IR^m)} \leq \| (\|\psi_j f\|_{\mathsf{B}^\beta_{p,p}
(\IR^m)})_{j \in \IZ^m}\|_{\ell ^p(\IZ^m)} \leq C_2 \|f\|_{\mathsf{B}^\beta_{p,p}(\IR^m)}. 
$$
With these preparations, we can now give the proof of Proposition \ref{pr2}:

\begin{proof}[Proof of Proposition \ref{pr2}] We prove the result in several steps. 

\medskip
 
{\it Step 1 (constant coefficients):  Let 
$$
Q = \sum_{\alpha \in \IN_n^m}Q_\alpha \partial^\alpha, 
\text{ with }Q_\alpha \in \mathrm{Mat}(\IC; \ell \times \ell), 
$$
and suppose that for some $\theta_0 \in (-\pi, \pi]$ and all
$$
 (\xi, r) \in (\IR^m \times [0, \infty)) \setminus \{(0, 0)\}),
$$
the $l \times l$ matrix $r^n \mathrm{e}^{\mathrm{i}\theta_0}  - \mathrm{i}^n \sigma_{Q}(\xi)$ is 
invertible, and that there exists $C >0$ such that for all $(\xi, r)$ as above one has
\begin{equation}\label{eq4}
|(r^n \mathrm{e}^{\mathrm{i}\theta_0}  -  \sigma_{Q}(\mathrm{i}\xi))^{-1}| \leq C(r + |\xi|)^{-n}. 
\end{equation}
Then for any $\beta \in \IR$, $1 \leq p, q \leq \infty$, there exists $R \geq 0$ such that, if $r \geq R$ 
and $g \in \mathsf{B}^\beta_{p,q}(\IR^m, \IC^\ell)$, the system (\ref{eq1}) has a unique solution 
$u\in\mathsf{B}^{\beta+n}_{p,q}(\IR^m, \IC^\ell)$. Moreover,  there exists a constant $C_0 >0$, 
which only depends on $\beta$, $p, q$, the constant $C$ in (\ref{eq4}) and on 
${\displaystyle \max_{\alpha \in \IN^m_n}}$ $|Q_\alpha|$, such that for all 
$r \geq R$ one has
$$
r^n \|u\|_{\mathsf{B}^\beta_{p,q}(\IR^m, \IC^\ell)} + \|u\|_{\mathsf{B}^{\beta+n}_{p,q}(\IR^m, \IC^\ell)} 
\leq  C_0 \|g\|_{\mathsf{B}^\beta_{p,q}(\IR^m, \IC^\ell)}. 
$$
By interpolation, we obtain also, for every $\theta \in [0, 1]$ and $r \geq R$, 
\begin{equation}\label{eq5A}
\|u\|_{\mathsf{B}^{\beta+\theta n}_{p,q}(\IR^m, \IC^\ell)} \leq 
C_0 r^{(\theta - 1)n}\|g\|_{\mathsf{B}^\beta_{p,q}(\IR^m, \IC^\ell)}. 
\end{equation}
}

\medskip

In order to prove the statement from Step 1, we start by assuming that $Q$ coincides with its principal 
part $Q_n: = \sum_{|\alpha| = n} Q_\alpha \partial^\alpha$. Then, employing the Fourier transform, 
it is easily seen that for any $r\geq 0$, $g\in \mathsf{S}'(\IR^m, \IC^\ell)$, the only possible solution 
$u\in\mathsf{S}'(\IR^m, \IC^\ell)$ of (\ref{eq1}) is 
$$
u = {\mathcal F}^{-1}\left((r^n \mathrm{e}^{\mathrm{i}\theta_0} -  
\sigma_{Q}(\mathrm{i}\xi))^{-1} {\mathcal F} g\right). 
$$
Observe that $(r^n \mathrm{e}^{\mathrm{i}\theta_0} -  \sigma_{Q}(\mathrm{i}\xi))^{-1}$ is 
positively homogeneous of degree $-n$ in the variables 
$$
(r, \xi) \in ([0, \infty) \times \IR^m) \setminus \{(0,0)\}.
$$
So for all $ \alpha \in \IN_n^m$, the matrix  
$\partial_\xi^\alpha (r^n \mathrm{e}^{\mathrm{i}\theta_0}-  \sigma_{Q}(\mathrm{i}\xi))^{-1}$ 
is positively homogeneous of degree $-n - |\alpha|$ in these variables, implying
$$
\left|\partial_\xi^\alpha(r^n \mathrm{e}^{\mathrm{i}\theta_0} -  \sigma_{Q}(\mathrm{i}\xi))^{-1}\right| 
\leq C(\alpha) (r + |\xi|)^{-n-|\alpha|}. 
$$
It is easily seen that $C(\alpha)$ can be estimated in terms of the constant $C$ in (\ref{eq4}) and 
of ${\displaystyle \max_{\alpha \in \IN^m_n}}$ $|Q_\alpha|$. We deduce from (vi) that, for 
all $r \geq  0$, and all  $g \in \mathsf{B}^{\beta}_{p,q}(\IR^m, \IC^\ell)$, the problem
\begin{equation}\label{eq5}
r^n \mathrm{e}^{\mathrm{i}\theta_0}u(x) - Q_n u(x) = g(x), \quad x \in \IR^m
\end{equation}
has a unique solution $u$ in $\mathsf{B}^{\beta+n}_{p,q}(\IR^m, \IC^\ell)$, and also that for all 
$r_0>0$ there is $C(r_0)>0$ such that for all $r \geq r_0 $ one has 
$$
\|u\|_{\mathsf{B}^{\beta+n}_{p,q}(\IR^m, \IC^\ell)} \leq 
C(r_0) \|g\|_{\mathsf{B}^{\beta}_{p,q}(\IR^m, \IC^\ell)}. 
$$
The latter inequality together with (\ref{eq5}) also gives 
$$
\begin{array}{ll}
\|u\|_{\mathsf{B}^{\beta}_{p,q}(\IR^m, \IC^\ell)} & \leq 
r^{-n} (\|g\|_{\mathsf{B}^{\beta}_{p,q}(\IR^m, \IC^\ell)} + 
\|Q_n u\|_{\mathsf{B}^{\beta}_{p,q}(\IR^m, \IC^\ell)}) 
\\ \\
&  \leq C_1(r_0) r^{-n} (\|g\|_{\mathsf{B}^{\beta}_{p,q}(\IR^m, \IC^\ell)} + 
\|u\|_{\mathsf{B}^{n+\beta}_{p,q}(\IR^m, \IC^\ell)}) 
\\ \\
&\leq C_2(r_0) r^{-n}\|g\|_{\mathsf{B}^{\beta}_{p,q}(\IR^m, \IC^\ell)},
\end{array}
$$
and now the estimate (\ref{eq5A}) follows directly by interpolation (see (iv)). Now we extend the 
previous facts from $Q_n$ to $Q$, taking $r$ sufficiently large. In fact, we write (\ref{eq1}) in the form
$$
r^n \mathrm{e}^{\mathrm{i}\theta_0}u(x) - Q_n u(x) = (Q - Q_n)u(x) + g(x).
$$
Taking $h:= r^n \mathrm{e}^{\mathrm{i}\theta_0}u - Q_n u$ as new unknown, we obtain 
\begin{equation}\label{eq7}
h - (Q - Q_n) (r^n \mathrm{e}^{\mathrm{i}\theta_0} - Q_n)^{-1} h = g.
\end{equation}
We have
$$
\begin{array}{ll}
&\|(Q - Q_n) (r^n \mathrm{e}^{\mathrm{i}\theta_0} - Q_n)^{-1}h\|_{\mathsf{B}^{\beta}_{p,q}(\IR^m, \IC^\ell} )
\\ \\
&\leq C_0 \|(r^n \mathrm{e}^{\mathrm{i}\theta_0} - Q_n)^{-1}h\|_{\mathsf{B}^{\beta+n-1}_{p,q}(\IR^m, \IC^\ell)}  
\leq C_1 r^{-1} \|h\|_{\mathsf{B}^{\beta}_{p,q}(\IR^m, \IC^\ell)}.
\end{array}
$$
So, if $C_1 r^{-1} < 1$, then (\ref{eq7}) has a unique solution 
$h\in\mathsf{B}^{\beta}_{p,q}(\IR^m, \IC^\ell)$ 
and, in case $C_1 r^{-1} \leq \frac{1}{2}$ such solution can be estimated in the form
$$
\|h\|_{\mathsf{B}^{\beta}_{p,q}(\IR^m, \IC^\ell)} \leq 2 \|g\|_{\mathsf{B}^{\beta}_{p,q}(\IR^m, \IC^\ell)}.
$$
So the previous estimates and results can be extended from $Q_n$ to $Q$. 

\medskip

{\it Step 2 (a priori estimate for solutions in $\mathsf{B}^{\beta+n}_{p,q}$ with small support): Let 
$\beta \in \IR$, $1 \leq p, q \leq \infty$. Then there exist $r_0 ,\delta, C >0$ with the following property: 
if $u \in \mathsf{B}^{\beta+n}_{p,q}(\IR^m, \IC^\ell)$ satisfies
$$
r^n \mathrm{e}^{\mathrm{i}\theta_0} u - Q u = g,\> \mathrm{supp}(u)\subset 
\prod_{j=1}^m [x^0_j - \delta, x^0_j + \delta]\text{ for some $x^0 \in \IR^m$},
$$ 
then one has 
\begin{equation}\label{eq8}
r^n \|u\|_{\mathsf{B}^\beta_{p,q}(\IR^m, \IC^\ell)} + \|u\|_{\mathsf{B}^{\beta+n}_{p,q}(\IR^m, \IC^\ell)} 
\leq C \|g\|_{\mathsf{B}^\beta_{p,q}(\IR^m, \IC^\ell)}. 
\end{equation}
}

\medskip

\noindent
In order to prove this, we define the constant coefficient operator 
$Q(x_0,\partial):=\sum_{\alpha\in\IN^m_n}Q_{\alpha}(x_0)\partial^{\alpha}$ and observe that 
$$
r^n \mathrm{e}^{\mathrm{i}\theta_0} u(x) - Q(x^0,\partial)u(x) = (Q - Q(x^0,\partial))u(x) + g(x). 
$$
Let $\epsilon >0$. For any $\phi \in \mathsf{C}_{\mathrm{c}}^\infty (\IR^m)$ which satisfies
\begin{align*}
&\mathrm{supp}(\phi)\subset \prod_{j=1}^m [x^0_j - 2\delta, x^0_j + 2\delta], 
\\
&\phi = 1 \text{ in } \prod_{j=1}^m [x^0_j - \delta, x^0_j + \delta], \>\|\phi\|_{\mathsf{L}^\infty(\IR^m)} 
= 1,
\end{align*}
we have
$$
(Q - Q(x^0,\partial))u = \phi (Q - Q(x^0,\partial))u.
$$ 
So, taking $\delta$ sufficiently small, from (iv) and (vii) we obtain 
$$
\|(Q - Q(x^0,\partial))u\|_{\mathsf{B}^{\beta}_{p,q}(\IR^m, \IC^\ell)} \leq 
\epsilon \|u\|_{\mathsf{B}^{\beta+n}_{p,q}(\IR^m, \IC^\ell)} + 
C(\epsilon) \|u\|_{\mathsf{B}^{\beta+n-1}_{p,q}(\IR^m, \IC^\ell)}.
$$
Observe that $\delta$ can be chosen independent of $x^0$. So, from Step 1 with $\theta=(n-1)/n$ in 
\eqref{eq5A}, taking $r$ sufficiently large (uniformly in $x^0$) we obtain
\begin{equation*}
\begin{array}{c}
r^n \|u\|_{\mathsf{B}^{\beta}_{p,q}(\IR^m, \IC^\ell)} + r\|u\|_{\mathsf{B}^{\beta+n-1}_{p,q}(\IR^m, \IC^\ell)} + 
\|u\|_{\mathsf{B}^{\beta+n}_{p,q}(\IR^m, \IC^\ell)}  
\\ \\
\leq C_0\left(\epsilon \|u\|_{\mathsf{B}^{\beta+n}_{p,q}(\IR^m, \IC^\ell)} + 
C(\epsilon) \|u\|_{\mathsf{B}^{\beta+n-1}_{p,q}(\IR^m, \IC^\ell)} + 
\|g\|_{\mathsf{B}^{\beta}_{p,q}(\IR^m, \IC^\ell)}\right). 
\end{array}
\end{equation*}
Taking $\epsilon$ so small that $C_0 \epsilon \leq \frac{1}{2}$ and $r$ so large that 
$C_0 C(\epsilon) \leq r$, we deduce (\ref{eq8}). 

\medskip

{\it Step 3 (a priori estimate for arbitrary solutions in $\mathsf{B}^{\beta+n}_{p,p}$): For any 
$\beta \in \IR$, $p \in [1, \infty)$, there exist $C_0, r_0>0$ such that if $r \geq r_0$ and 
$u \in \mathsf{B}^{\beta +n}_{p,p}(\IR^m; \IC^\ell)$ is a solution to (\ref{eq1}), then
\begin{equation}
r^n \|u\|_{\mathsf{B}^\beta_{p,p}(\IR^m, \IC^\ell)} + \|u\|_{\mathsf{B}^{\beta+n}_{p,p}(\IR^m, \IC^\ell)} 
\leq  C_0 \|g\|_{\mathsf{B}^\beta_{p,p}(\IR^m, \IC^\ell)}. 
\end{equation}
}

\medskip

To see this, we take $\delta,r_0>0$ so that the conclusion in Step 2 holds. We consider a family of 
functions $(\psi_j)_{j \in \IZ^m}$ as in (viii). Let $u \in \mathsf{B}^{\beta+n}_{p,p}(\IR^m, \IC^\ell)$ 
solve (\ref{eq1}), with $r \geq r_0$. For each $j \in \IZ^m$ we have 
$$
r^n \psi_j u - Q(\psi_j u) = \psi_j g + Q_j u,
$$
with the commutator
$$
Q_j:=[Q,\psi_j] = \sum_{1 \leq |\alpha| \leq n} Q_\alpha \sum_{\gamma < \alpha} 
{\alpha \choose \gamma} \partial^{\alpha - \gamma} \psi_j \partial^\gamma.
$$
We set
$$
\IZ_j:= \{i| \>i \in \IZ^m, \>  \mathrm{supp}(\psi_i) \cap \mathrm{supp}(\psi_j) \neq \emptyset\}. 
$$
Then $Q_j u = \sum_{i \in \IZ_j} Q_j(\psi_i u)$, so that
$$
\|Q_j u\|_{\mathsf{B}^\beta_{p,p}(\IR^m, \IC^\ell)} \leq C_1 \sum_{i \in \IZ_j} 
\|\psi_i u\|_{\mathsf{B}^{\beta+n-1}_{p,p}(\IR^m, \IC^\ell)}. 
$$
with $C_1$ independent of $j$. So, from Step 2, we have, for each $j \in \IZ^m$, 
\begin{equation}\label{eq12}
\begin{array}{c}
r^n \|\psi_j u\|_{\mathsf{B}^\beta_{p,p}(\IR^m, \IC^\ell)} + 
r \|\psi_j u\|_{\mathsf{B}^{\beta+n-1}_{p,p}(\IR^m, \IC^\ell)} +  
\|\psi_j u\|_{\mathsf{B}^{\beta+n}_{p,p}(\IR^m, \IC^\ell)} 
\\ \\
\leq C_2\left( \|\psi_j g\|_{\mathsf{B}^\beta_{p,p}(\IR^m, \IC^\ell)} + 
\sum_{i \in \IZ_j} \|\psi_i u\|_{\mathsf{B}^{\beta+n-1}_{p,p}(\IR^m, \IC^\ell)}\right).
\end{array}
\end{equation}
We observe that $\IZ_j$ has at most $7^m$ elements. So we have, in case $p < \infty$, 
$$
\Big(\sum_{i \in \IZ_j} \|\psi_i u\|_{\mathsf{B}^{\beta+n-1}_{p,p}(\IR^m, \IC^\ell)}\Big)^p \leq 
7^{m(p-1)} \sum_{i \in \IZ_j} \|\psi_i u\|_{\mathsf{B}^{\beta+n-1}_{p,p}(\IR^m, \IC^\ell)}^p
$$
and
\begin{align*}
&\sum_{j \in \IZ^m}\Big(\sum_{i \in \IZ_j} \|\psi_i u\|_{\mathsf{B}^{\beta+n-1}_{p,p}
(\IR^m, \IC^\ell)}\Big)^p 
\\
&\leq 7^{m(p-1)} \sum_{j \in \IZ^m}\sum_{i \in \IZ_j} \|\psi_i u\|_{\mathsf{B}^{\beta+n-1}_{p,p}
(\IR^m, \IC^\ell)}^p 
\\  
&= 7^{m(p-1)}  \sum_{i \in \IZ^m} \Big(\sum_{j \in \IZ_i} 1\Big) 
\|\psi_i u\|_{\mathsf{B}^{\beta+n-1}_{p,p}(\IR^m, \IC^\ell)}^p 
\\  
&\leq 7^{mp} 
\left\|(\|\psi_i u\|_{\mathsf{B}^{\beta+n-1}_{p,p}(\IR^m, \IC^\ell)})_{i \in \IZ^m}\right\|_{\ell^p(\IZ^m)}^p. 
\end{align*}
So, from (\ref{eq12}) and (viii), we deduce
 \begin{align*}
&r^n \|u\|_{\mathsf{B}^\beta_{p,p}(\IR^m, \IC^\ell)} + 
r \|u\|_{\mathsf{B}^{\beta+n-1}_{p,p}(\IR^m, \IC^\ell)} +  
\|u\|_{\mathsf{B}^{\beta+n}_{p,p}(\IR^m, \IC^\ell)} 
\\  
&\leq C_3 \left(r^n \left\|\big(\|\psi_j u\|_{\mathsf{B}^\beta_{p,p}
(\IR^m, \IC^\ell)}\big)_{j \in \IZ^m} \right\|_{\ell^p(\IZ^m)}\right. 
\\ 
&\>\>\>\>\>\>\>\>\>\>\>\> + 
r \left\|\big(\|\psi_j u\|_{\mathsf{B}^{\beta+n-1}_{p,p}(\IR^m, \IC^\ell)}
\big)_{j \in \IZ^m} \right\|_{\ell^p(\IZ^m)} 
\\
&\>\>\>\>\>\>\>\>\>\>\>\> 
\left.+  \left\|\big(\|\psi_j u\|_{\mathsf{B}^{\beta+n}_{p,p}
(\IR^m, \IC^\ell)}\big)_{j \in \IZ^m} \right\|_{\ell^p(\IZ^m)}\right)
\\  
&\leq C_4 \left(\left\|\big( \|\psi_j g\|_{\mathsf{B}^\beta_{p,p}
(\IR^m, \IC^\ell)}\big)_{j \in \IZ^m} \right\|_{\ell^p(\IZ^m)} \right.
\\
&\>\>\>\>\>\>\>\>\>\>\>\>+ \left. \left\|\big(\|\psi_j u\|_{\mathsf{B}^{\beta+n-1}_{p,p}
(\IR^m, \IC^\ell)}\big)_{j \in \IZ^m} \right\|_{\ell^p(\IZ^m)}\right) 
\\  
&\leq C_5 \left(\|g\|_{\mathsf{B}^\beta_{p,p}(\IR^m, \IC^\ell)} + \|u\|_{\mathsf{B}^{\beta+n-1}_{p,p}
(\IR^m, \IC^\ell)}\right). 
\end{align*}
Taking $r \geq C_5$, we get the conclusion. 

\medskip

{\it Step 4: For any $\beta \in \IR$, $p \in [1, \infty)$, there exists $r_0 \geq 0$ such that if 
$r \geq r_0$, $g \in \mathsf{B}^\beta_{p,p}(\IR^m, \IC^\ell)$, then (\ref{eq1}) has a unique solution 
$u\in\mathsf{B}^{\beta+n}_{p,p}(\IR^m, \IC^\ell)$. }

\medskip

The uniqueness follows from Step 3. We show the existence by a duality argument. We think of 
$r^n \mathrm{e}^{\mathrm{i}\theta_0} - Q$ as an operator from 
$\mathsf{B}^{\beta+n}_{p,p}(\IR^m, \IC^\ell)$ to $\mathsf{B}^{\beta}_{p,p}(\IR^m, \IC^l)$. 
By Step 3, if $r$ is sufficiently large, its range is a closed subspace of  
$\mathsf{B}^{\beta}_{p,p}(\IR^m, \IC^\ell)$. Assume that it does not coincide with the whole space. 
Then, applying a well known consequence of the theorem of 
Hahn-Banach and (v), there exists $h \in \mathsf{B}^{-\beta}_{p',p'}(\IR^m, \IC^l)$, $h \neq 0$, such that 
$$
\left\langle(r^n e^{i\theta_0} - Q)u, \overline h\right\rangle = 0
\>\text{ for all } u \in \mathsf{B}^{\beta+n}_{p,p}(\IR^m, \IC^\ell). 
$$
This implies that 
\begin{equation}\label{eq13}
(r^n \mathrm{e}^{-\mathrm{i}\theta_0} - Q^\ast) h = 0.
\end{equation}
Now, it is easily seen that $Q^\ast$ satisfies the assumptions of Proposition \ref{pr2} if we 
replace $\theta_0$ with $-\theta_0$. We deduce from Step 3 that, if $r$ is sufficiently large, 
(\ref{eq13}) implies $h = 0$, a contradiction. 

\medskip

{\it Step 5: For any $\beta \in \IR$ there exists $r_0 \geq 0$ such that if $r \geq r_0$, 
$g \in \mathsf{B}^\beta_{\infty,\infty}(\IR^m, \IC^\ell)$, then 
(\ref{eq1}) has a unique solution $u\in\mathsf{B}^{\beta+n}_{\infty,\infty}(\IR^m, \IC^\ell)$. }

\medskip

In the proof of Lemma 2.4 from \cite{davide} it is shown that for any 
$g \in \mathsf{B}^\beta_{\infty,\infty}(\IR^m)$, 
there is a sequence $(g_k)_{k \in \IN}$ in $\mathsf{S}(\IR^m)$ converging to $g$ in $\mathsf{S}'(\IR^m)$ 
and bounded in $\mathsf{B}^\beta_{\infty,\infty}(\IR^m)$. So we take a sequence $(g_k)_{k \in \IN}$ in 
$\mathsf{S}(\IR^m, \IC^\ell)$ converging to $g$ in $\mathsf{S}'(\IR^m, \IC^\ell)$ and bounded in 
$\mathsf{B}^\beta_{\infty,\infty}(\IR^m, \IC^\ell)$. We fix $\gamma$  larger than  $\beta + \frac{m}{2}$ 
and think of $g_k$ as an element of $\mathsf{B}^\gamma_{2,2}(\IR^m, \IC^\ell)$. Then, by Step 4, if $r$ 
is sufficiently large, the equation
$$
r^n \mathrm{e}^{\mathrm{i}\theta_0} u_k - Q u_k = g_k
$$
has a unique solution $u_k$ in $\mathsf{B}^{\gamma+n}_{2,2}(\IR^m, \IC^\ell)$. By (iii), 
$u_k \in \mathsf{B}^{\beta+n}_{\infty,\infty}(\IR^m, \IC^\ell)$ and, by Step 3, if $r$ is sufficiently 
large, the sequence $(u_k)_{k \in \IN}$ is bounded in $\mathsf{B}^{\beta+n}_{\infty,\infty}(\IR^m, \IC^\ell)$, 
because $(g_k)_{k \in \IN}$ is bounded in $\mathsf{B}^{\beta}_{\infty,\infty}(\IR^m, \IC^\ell)$. Then, by (v) 
and the theorem of Alaoglu, we may assume, possibly passing to a subsequence,  that there exists 
$u\in\mathsf{B}^{\beta+n}_{\infty,\infty}(\IR^m, \IC^\ell)$ such that  
$$
\lim_{k \to \infty} u_k = u\>\text{ in the weak topology  }\>\ 
w(\mathsf{B}^{\beta+n}_{\infty,\infty}(\IR^m, \IC^\ell), 
\mathsf{B}^{-\beta-n}_{1,1}(\IR^m, \IC^\ell)).
$$
Such convergence implies convergence in $\mathsf{S}'(\IR^m, \IC^\ell)$. So 
$$
(r^n \mathrm{e}^{\mathrm{i}\theta_0} - Q) u_k \to (r^n \mathrm{e}^{\mathrm{i}\theta_0} - Q) u
\text{ as $k\to\infty$ in $\mathsf{S}'(\IR^m, \IC^\ell)$}. 
$$
We deduce that $(r^n \mathrm{e}^{\mathrm{i}\theta_0} - Q) u = g$. 

\medskip

{\it Step 6: Full statement.}

\medskip

This is a simple consequence of Step 4, Step 5 and the interpolation property (iv).

\end{proof}

\section*{Acknowledgements}
B.G. has been financially supported by the SFB~647 ``Space-Time-Matter''. D.P. is member of Italian 
CNR-GNAMPA and has been partially supported by PRIN 2010 
M.I.U.R. ``Problemi differenziali di evoluzione: approcci deterministici e stocastici e loro interazioni".

\end{document}